\numberwithin{equation}{section}
\newtheorem{theorem}{Theorem}[section]
\newtheorem{proposition}[theorem]{Proposition}
\newtheorem{lemma}[theorem]{Lemma}
\newtheorem{corollary}[theorem]{Corollary}
\theoremstyle{definition}
\newtheorem{definition}[theorem]{Definition}
\newtheorem{remark}[theorem]{Remark}
\begin{document}

\baselineskip=15pt

\title[Ramified maps and stability of pulled back bundles]{Ramified covering maps and
stability of pulled back bundles}

\author[I. Biswas]{Indranil Biswas}

\address{School of Mathematics, Tata Institute of Fundamental
Research, Homi Bhabha Road, Mumbai 400005, India}

\email{indranil@math.tifr.res.in}

\author[A. J. Parameswaran]{A. J. Parameswaran}

\address{School of Mathematics, Tata Institute of Fundamental
Research, Homi Bhabha Road, Mumbai 400005, India}

\email{param@math.tifr.res.in}

\subjclass[2010]{14H30, 14H60, 14E20}

\keywords{Genuinely ramified map, stable bundle, maximal semistable subbundle, socle.}

\date{}

\begin{abstract}
Let $f\,:\,C\,\longrightarrow\,D$ be a nonconstant separable morphism
between irreducible smooth projective curves defined over an
algebraically closed field. We say that $f$ is genuinely ramified if
${\mathcal O}_D$ is the maximal semistable subbundle of $f_*{\mathcal O}_C$ (equivalently,
the homomorphism $f_*\, :\, \pi_1^{\rm et}(C)\,\longrightarrow\, \pi_1^{\rm et}(D)$
is surjective). We prove that the pullback
$f^*E\,\longrightarrow\, C$ is stable for every stable vector bundle $E$ on $D$
if and only if $f$ is genuinely ramified.
\end{abstract}

\maketitle

\tableofcontents

\section{Introduction}

Let $k$ be an algebraically closed field. Let $f\,:\,C\,\longrightarrow\,D$ be a nonconstant separable morphism
between irreducible smooth projective curves defined over $k$. For any semistable vector bundle $E$ on $D$, the
pullback $f^*E$ is also semistable. However, $f^*E$ need not be stable for every stable vector bundle
$E$ on $D$. Our aim here is to characterize all $f$ such that $f^*E$ remains stable for every stable vector bundle
$E$ on $D$. It should be mentioned that $E$ is stable (respectively, semistable) if $f^*E$ is
stable (respectively, semistable).

For any $f$ as above the following five conditions are equivalent:
\begin{enumerate}
\item The homomorphism between \'etale fundamental groups
$$f_*\, :\, \pi_1^{\rm et}(C)\,\longrightarrow\, \pi_1^{\rm et}(D)$$
induced by $f$ is surjective.

\item The map $f$ does not factor through some nontrivial \'etale cover of $D$ (in particular, $f$ is not
nontrivial \'etale).

\item The fiber product $C\times_D C$ is connected.

\item $\dim H^0(C,\, f^*f_*{\mathcal O}_C)\,=\,1$.

\item The maximal semistable subbundle of the direct image $f_*{\mathcal O}_C$ is ${\mathcal O}_D$.
\end{enumerate}
The map $f$ is called genuinely ramified if any (hence all) of the above five conditions holds.
Proposition \ref{genuinerampi1surj} and Definition \ref{def1} show
that the above statements (1), (2) and (5) are equivalent; in Lemma
\ref{genuinerampi1surjcurves} it is shown that the statements (3), (4) and (5) are equivalent.

We prove the following (see Theorem \ref{thm2}):

\begin{theorem}\label{thmi}
Let $f\,:\,C\,\longrightarrow\,D$ be a nonconstant separable morphism
between irreducible smooth projective curves defined over $k$. The map $f$ is genuinely ramified 
if and only if $f^*E\, \longrightarrow\, C$ is stable for every stable vector bundle $E$ on $D$.
\end{theorem}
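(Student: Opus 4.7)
The plan is to prove the two implications separately.

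\textbf{Reverse implication.} I argue by contrapositive. If $f$ is not genuinely ramified, condition~(2) gives a factorization $f = h \circ g$ with $h \colon D' \to D$ a nontrivial connected \'etale cover. For a sufficiently generic line bundle $L$ on $D'$, take $E := h_*L$, which is stable on $D$ (a standard property of direct images under \'etale covers). Flat base change yields $h^*h_*\mathcal{O}_{D'} = \mathcal{O}_{D'} \oplus W$ with $W \ne 0$, since the diagonal is a distinguished component of the \'etale scheme $D' \times_D D'$; hence $h^*E = L \oplus (L \otimes W)$ is polystable but not stable. Pulling back by the finite surjective $g$ preserves proper subsheaves of the same slope, so $f^*E = g^*(h^*E)$ is not stable.

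\textbf{Forward implication.} Let $E$ be stable on $D$; then $f^*E$ is automatically semistable. Suppose for contradiction that $f^*E$ is not stable, and let $V \subsetneq f^*E$ be the socle, a proper polystable subbundle of slope $\mu(f^*E)$. The argument has three steps. First, I compute
$$H^0(C, \mathrm{End}(f^*E)) \;=\; H^0(D, \mathrm{End}(E) \otimes f_*\mathcal{O}_C) \;=\; k$$
via the projection formula: since $E$ is stable, $\mathrm{End}(E)$ is semistable of slope zero, and since $\mathcal{O}_D$ is the maximal semistable subbundle of $f_*\mathcal{O}_C$ (condition~(5)), the maximal semistable subbundle of $\mathrm{End}(E) \otimes f_*\mathcal{O}_C$ is $\mathrm{End}(E) \otimes \mathcal{O}_D = \mathrm{End}(E)$, the remaining Harder--Narasimhan pieces having strictly negative slope; Schur's lemma then gives $H^0 = k$.

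Second, I use $\mathrm{End}(f^*E) = k$ to show that $\mathrm{Hom}_C(f^*E, V) = 0$: any nonzero $\phi \colon f^*E \to V$ composed with $V \hookrightarrow f^*E$ would be a scalar endomorphism, which forces $V = f^*E$. By the adjunction $f^* \dashv f_*$, this gives $\mathrm{Hom}_D(E, f_*V) = 0$, so the subsheaf $f_*V$ of $E \otimes f_*\mathcal{O}_C = f_*f^*E$ has zero intersection with $E = E \otimes \mathcal{O}_D$. Tensoring $0 \to \mathcal{O}_D \to f_*\mathcal{O}_C \to Q \to 0$ with $E$, I obtain an embedding
$$f_*V \;\hookrightarrow\; E \otimes Q,$$
and $\mu^{\max}(E \otimes Q) < \mu(E)$ since all Harder--Narasimhan slopes of $Q$ are strictly negative.

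Third, I derive a contradiction from this embedding by exploiting the canonicity of the socle on the fiber product $C \times_D C$ (connected by condition~(3)). The canonical isomorphism $p_1^*f^*E \cong p_2^*f^*E$ given by descent data for $E$ must identify the socles of the two sides; granting that $p_i^*V$ remains the socle of $p_i^*f^*E$, one obtains $p_1^*V = p_2^*V$ as subsheaves, and flat descent produces $V' \subset E$ with $V = f^*V'$. Since $V \subsetneq f^*E$ is proper and of the same slope as $f^*E$, the descended $V' \subsetneq E$ has slope $\mu(E)$, contradicting stability of $E$.

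\textbf{Main obstacle.} The hardest step is the descent in step three: one must justify that pullback along the (possibly non-smooth, reducible) projections $p_i \colon C \times_D C \to C$ preserves the socle, or find an alternative construction of a slope-$\mu(E)$ subsheaf of $f_*V$ that contradicts the embedding into $E \otimes Q$ of strictly smaller maximal slope. In positive characteristic, one must additionally ensure that $\mathrm{End}(E)$ is genuinely semistable of slope zero --- likely handled by passing to a suitable Galois closure of $f$ and invoking strong semistability (e.g.\ Ramanan--Ramanathan) before reducing back to the given $f$.
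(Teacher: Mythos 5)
Your overall architecture matches the paper's: the reverse implication via the direct image of a line bundle under the \'etale cover through which $f$ factors, and the forward implication via the projection formula $H^0(C,\mathrm{Hom}(f^*V,f^*W))\cong H^0(D,\mathrm{Hom}(V,W\otimes f_*\mathcal{O}_C))$ plus a descent of the destabilizing subbundle. However, the two steps you flag as difficult are not loose ends --- they are the entire content of the paper, and as written both are genuine gaps. First, your slope estimate $\mu_{\max}(E\otimes Q)<\mu(E)$ (and likewise the claim that the Harder--Narasimhan pieces of $\mathrm{End}(E)\otimes f_*\mathcal{O}_C$ beyond $\mathrm{End}(E)$ have negative slope) is inferred from ``all HN slopes of $Q$ are negative.'' In positive characteristic the tensor product of semistable bundles need not be semistable, so this inference is invalid; moreover your proposed repair via strong semistability does not work, because a stable bundle need not be strongly semistable and there is no Galois closure that makes it so. The paper's way around this is Proposition \ref{sumofnegetivedegreelinebundles1}: for a genuinely ramified \emph{Galois} $f$ one shows $f^*((f_*\mathcal{O}_C)/\mathcal{O}_D)$ embeds in a direct sum of line bundles of degree $-1$, by ordering the components $C_\sigma$ of the connected curve $C\times_D C$ (Lemma \ref{lemord}) and locating forced vanishing points; tensoring a semistable bundle by a \emph{line} bundle is harmless in any characteristic, and Lemma \ref{negetiveslope} then reduces the general case to the Galois one. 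Nothing in your outline substitutes for this. (A smaller issue in the same step: $\mathrm{Hom}(E,f_*V)=0$ does not give $f_*V\cap E=0$, so the injectivity of $f_*V\to E\otimes Q$ is unjustified; and even granting it, $\mu_{\max}(f_*V)\le\mu(E)$ already holds by Lemma \ref{le1}, so no contradiction results --- the weight of the argument falls entirely on your step three.)

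Second, step three is exactly the assertion that the socle descends along a ramified cover, which the introduction of the paper identifies as the unproved sentence in \cite{PS} that motivated this work. Pulling back to the projections of $C\times_D C$ does not obviously preserve socles: that fiber product is singular and reducible, so ``the socle of $p_i^*f^*E$'' is not even defined in the usual framework, and the descent-datum matching is precisely what must be proved. The paper's Theorem \ref{thm1} avoids this entirely: it passes to a Galois closure $F:\widehat C\to D$, factors through the \emph{\'etale} part $g:\widehat D\to D$ (so that $C\times_D\widehat D$ is smooth and the socle/Galois-invariance argument is legitimate there), proves $g^*V$ and then $\widehat g^*V_j$ are polystable with stable summands using Lemma \ref{lemma3.8}, and descends the subbundle $S\subset f^*V$ by transporting a right ideal of the endomorphism algebra through the identification \eqref{ho3}. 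Your reverse implication is essentially the paper's Propositions \ref{prop1}--\ref{prop2}, except that you assert stability of $h_*L$ for generic $L$ rather than proving it (the paper takes $\deg L=1$ and uses coprimality of rank and degree), and the decomposition $h^*h_*L=L\oplus(L\otimes W)$ is not correct in general, though only the canonical slope-preserving surjection $h^*h_*L\to L$ is actually needed.
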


The key technical step in the proof of Theorem \ref{thmi} is the following (see
Proposition \ref{sumofnegetivedegreelinebundles1}):

\begin{proposition}\label{ip}
Let $f\,:\,C\,\longrightarrow\,D$ be a genuinely ramified Galois morphism, of degree $d$,
between irreducible smooth projective curves defined over $k$. Then
$$f^* ((f_* {\mathcal O}_C) / {\mathcal O}_D)\, \subset\, \bigoplus_{i=1}^{d-1} {\mathcal L}_i\, ,$$
where each ${\mathcal L}_i$ is a line bundle on $D$ of negative degree.
\end{proposition}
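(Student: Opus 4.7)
The plan is to analyze $f^*((f_*\mathcal{O}_C)/\mathcal{O}_D)$ via the fiber product $C\times_D C$ and the Galois decomposition. Write $G=\mathrm{Gal}(f)$, so $|G|=d$, and for each $g\in G$ let $\sigma_g\colon C\to C\times_D C$, $x\mapsto(x,gx)$, with graph $\Gamma_g=\sigma_g(C)\cong C$. Set-theoretically $C\times_D C=\bigcup_{g\in G}\Gamma_g$, and $\Gamma_e=\Delta$ is the diagonal.

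The first step is to identify the sheaf of interest as the pushforward of an ideal. By flat base change, $f^*f_*\mathcal{O}_C\cong p_{2*}\mathcal{O}_{C\times_D C}$ where $p_2\colon C\times_D C\to C$ is the second projection. The counit $f^*f_*\mathcal{O}_C\to\mathcal{O}_C$, which by the triangle identity of adjunction splits the canonical inclusion $\mathcal{O}_C\hookrightarrow f^*f_*\mathcal{O}_C$ coming from $\mathcal{O}_D\hookrightarrow f_*\mathcal{O}_C$, corresponds under this identification to the restriction map $\mathcal{O}_{C\times_D C}\to\mathcal{O}_\Delta$. Hence
$$f^*\bigl((f_*\mathcal{O}_C)/\mathcal{O}_D\bigr)\cong p_{2*}\mathcal{I}_\Delta,$$
where $\mathcal{I}_\Delta$ is the ideal sheaf of the diagonal.

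Next, working in the tame case so that $C\times_D C$ is reduced, the normalization inclusion $\mathcal{O}_{C\times_D C}\hookrightarrow\bigoplus_{g\in G}\mathcal{O}_{\Gamma_g}$ restricted to $\mathcal{I}_\Delta$ has zero $e$-component (since $\Gamma_e=\Delta$) and, for $g\ne e$, takes values in the ideal of $\Delta\cap\Gamma_g$. Pushing down by $p_2$ and using that $p_2\circ\sigma_g=g\in\mathrm{Aut}(C)$ identifies the $g$-summand with $\mathcal{O}_C(-F_g)$, where $F_g$ is the scheme-theoretic fixed divisor of $g$ on $C$; this yields the embedding
$$f^*\bigl((f_*\mathcal{O}_C)/\mathcal{O}_D\bigr)\hookrightarrow\bigoplus_{g\in G\setminus\{e\}}\mathcal{O}_C(-F_g),$$
a direct sum of $d-1$ line bundles of non-positive degree.

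The main obstacle is to ensure each summand has strictly negative degree, i.e.\ that every nontrivial $g\in G$ has at least one fixed point on $C$. A free $\langle g\rangle$-action would make $C\to C/\langle g\rangle$ \'etale, which does not on its face contradict genuine ramification of $f$. The argument must instead invoke condition (5): through such an intermediate \'etale cover, the character decomposition of $(C\to C/\langle g\rangle)_*\mathcal{O}_C$ pushed down to $D$ would produce a degree-zero semistable subbundle of $f_*\mathcal{O}_C$ distinct from $\mathcal{O}_D$, contradicting maximality. The wild ramification case (where $C\times_D C$ may carry nilpotents) and reconciliation with the paper's phrasing of the $\mathcal{L}_i$ as line bundles on $D$ rather than $C$---presumably via $G$-equivariant descent or direct comparison with the abelian isotypic decomposition---would require additional care.
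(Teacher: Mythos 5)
Your setup is sound and close in spirit to the paper's: the identification $f^*((f_*\mathcal{O}_C)/\mathcal{O}_D)\cong p_{2*}\mathcal{I}_\Delta$ via the counit splitting is correct (and cleaner than the paper's explicit choice of a complementary trivial summand), and the embedding into $\bigoplus_{g\ne e}\mathcal{O}_C(-F_g)$ via the normalization $\bigsqcup_g \Gamma_g\to C\times_D C$ is essentially the paper's Proposition \ref{sumofnegetivedegreelinebundles}. (Incidentally, the $\mathcal{L}_i$ in the paper's own proof are $\mathcal{O}_C(-z_i)$, line bundles on $C$; the ``on $D$'' in the statement is a slip, so your worry about descent to $D$ is moot.) But the resolution you propose for the main obstacle is wrong. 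It is \emph{false} that every nontrivial $g\in\Gamma$ has a fixed point on $C$ when $f$ is genuinely ramified: take a $(\mathbb{Z}/2)^2$-Galois cover of $\mathbb{P}^1$ whose local monodromies all lie in $\langle(1,0)\rangle\cup\langle(0,1)\rangle$; the inertia subgroups generate the whole group, so no intermediate \'etale cover of $D$ exists and $f$ is genuinely ramified, yet $(1,1)$ acts freely and $F_{(1,1)}=0$. Your suggested derivation from condition (5) cannot work: an \'etale intermediate quotient $C\to C/\langle g\rangle$ sits at the \emph{top} of the tower, and pushing its isotypic pieces down to $D$ through the (ramified) map $C/\langle g\rangle\to D$ produces subsheaves of negative degree, not degree-zero semistable subbundles of $f_*\mathcal{O}_C$; genuine ramification only forbids \'etale covers at the \emph{bottom}. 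So in your decomposition the summand indexed by such a $g$ is honestly $\mathcal{O}_C$, of degree zero, and the proof stalls there.

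The paper's way around this is the one extra idea your proposal is missing: do not compare every component $\Gamma_g$ with the diagonal, but with a \emph{neighbouring} component. Genuine ramification gives (Lemma \ref{genuinerampi1surjcurves}) that $C\times_D C$ is connected, so the intersection graph of the components $\{C_\sigma\}_{\sigma\in\Gamma}$ admits a spanning tree rooted at $C_e$; Lemma \ref{lemord} encodes this as an ordering $\gamma_1=e,\gamma_2,\dots,\gamma_d$ with a map $\eta(j)<j$ such that $C_{\gamma_j}\cap C_{\gamma_{\eta(j)}}\ne\emptyset$. One then projects $\mathcal{O}_{C\times_D C}$ to the $(d-1)$ differences $f_{\gamma_j}-f_{\gamma_{\eta(j)}}$, $j\ge 2$; each difference is forced to vanish at a chosen intersection point $z_{j-1}\in C_{\gamma_j}\cap C_{\gamma_{\eta(j)}}$, which lands the image in $\bigoplus\mathcal{O}_C(-z_{j-1})$, each of degree $-1$. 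In effect the relevant fixed-point statement is not that each $g$ has a fixed point, but that each $g$ is connected to $e$ by a chain of elements whose successive ``quotients'' have fixed points --- which is exactly what connectedness of $C\times_D C$, i.e.\ genuine ramification, provides. Without this (or an equivalent device) your argument does not close.
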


When $k\,=\, {\mathbb C}$, a vector bundle $F$ on a smooth complex projective curve is
stable if and only if $F$ admits an irreducible flat projective unitary connection
\cite{NS}. From this characterization of stable vector bundles it follows immediately
that given a nonconstant map $f\,:\,C\,\longrightarrow\,D$ between irreducible smooth complex
projective curves, $f^*E$ is stable for every stable vector bundle $E$ on $D$ if
the homomorphism of topological fundamental groups induced by $f$
$$
f_*\, :\, \pi_1(C,\, x_0)\,\longrightarrow\, \pi_1(D,\, f(x_0))
$$
is surjective.

Theorem \ref{thm1} was stated in \cite{PS} (it is \cite[p.~524, Lemma 3.5(b)]{PS}) without a
complete proof. In the proof of Lemma 3.5(b), which is given in three sentences in
\cite[p.~524]{PS}, it is claimed that the socle of a semistable
bundle descends under any ramified covering map (the first sentence).

\section{Genuinely ramified morphism}

The base field $k$ is assumed to be algebraically closed; there is no restriction on the
characteristic of $k$.

Let $V$ be a vector bundle on an irreducible smooth projective curve $X$ defined over $k$.
If $$0\,=\, V_0\, \subset\, V_1\, \subset\, \cdots \, \subset\, V_{n-1}\, \subset\, V_n\, =\, V$$
is the Harder-Narasimhan filtration of $V$ \cite[p.~16, Theorem 1.3.4]{HL}, then define
$$
\mu_{\rm max}(V)\,:=\, \mu(V_1)\,:=\,\frac{\text{deg}(V_1)}{\text{rk}(V_1)}\ \
\text{ and }\ \ \mu_{\rm min}(V)\,:=\, \mu(V/V_{n-1})\, .
$$
Furthermore, the above subbundle $V_1\, \subset\, V$ is called the \textit{maximal semistable subbundle} of $V$.

If $V$ and $W$ are vector bundles on $X$, and $\beta\,\in\, H^0(X,\, \text{Hom}(V,\, W))\setminus \{0\}$,
then it can be shown that
\begin{equation}\label{a1}
\mu_{\rm max}(W)\, \geq \, \mu_{\rm min}(V)\, .
\end{equation}
Indeed, we have
$$
\mu_{\rm min}(V)\, \leq \, \mu(\beta(V)) \, \leq\, \mu_{\rm max}(W)\, .
$$

\begin{remark}\label{rem1}
Let $f\,:\,X\,\longrightarrow\, Y$ be a nonconstant separable morphism
between irreducible smooth projective curves, and
let $F$ be a semistable vector bundle on $Y$. Then it is known that $f^*F$ is also semistable. Indeed, fixing
a nonconstant separable morphism $h\,:\,Z\,\longrightarrow\, X$, where $Z$
is an irreducible smooth projective curve and $f\circ h$ is Galois, we see
that $(f\circ h)^*F$ is semistable, because its maximal semistable subbundle,
being $\text{Gal}(f\circ h)$ invariant, descends to a subbundle of $F$.
The semistability of $(f\circ h)^*F\,=\, h^*f^*F$ immediately implies that $f^*F$ is semistable.
\end{remark}

\begin{lemma}\label{le1}
Let $f\,:\,X\,\longrightarrow\, Y$ be a nonconstant separable morphism of irreducible smooth projective curves.
Then for any semistable vector bundle $E$ on $X$,
$$\mu_{\rm max}(f_*E)\, \leq\, \mu(E)/{\rm deg}(f)\, .$$

More generally, for any vector bundle $E$ on $X$,
$$\mu_{\rm max}(f_*E)\, \leq\, \mu_{\rm max}(E)/{\rm deg}(f)\, .$$
\end{lemma}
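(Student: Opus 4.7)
The plan is to reduce the statement about $f_*E$ on $Y$ to a comparison on $X$ via the standard adjunction $\mathrm{Hom}_Y(F,\,f_*E)\,=\,\mathrm{Hom}_X(f^*F,\,E)$, then invoke inequality \eqref{a1} together with Remark \ref{rem1}. The general statement obviously implies the semistable one (since $\mu_{\max}(E)=\mu(E)$ when $E$ is semistable), so I would just prove the second inequality.

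First I would let $F\,\subset\, f_*E$ be the maximal semistable subbundle, so that $\mu(F)\,=\,\mu_{\max}(f_*E)$ by definition. The inclusion $F\,\hookrightarrow\, f_*E$ is a nonzero element of $\mathrm{Hom}_Y(F,\,f_*E)$, and under adjunction it corresponds to a nonzero morphism $\varphi\,:\,f^*F\,\longrightarrow\, E$ on $X$. Since $F$ is semistable on $Y$, Remark \ref{rem1} gives that $f^*F$ is semistable on $X$, so
$$
\mu_{\min}(f^*F)\,=\,\mu(f^*F)\,=\,\deg(f)\cdot\mu(F)\,=\,\deg(f)\cdot\mu_{\max}(f_*E),
$$
using that pullback along a finite map of degree $\deg(f)$ multiplies degrees by $\deg(f)$ while preserving ranks. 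Applying \eqref{a1} to the nonzero map $\varphi$ then yields
$$
\deg(f)\cdot\mu_{\max}(f_*E)\,=\,\mu_{\min}(f^*F)\,\leq\,\mu_{\max}(E),
$$
which is exactly the desired inequality.

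I do not foresee a serious obstacle here: everything reduces to three essentially formal points, namely that adjunction sends a nonzero map to a nonzero map (because it is a bijection of $\mathrm{Hom}$ groups), that the semistability of $F$ transfers to $f^*F$ via Remark \ref{rem1}, and that $\deg(f^*F)=\deg(f)\cdot\deg(F)$. Once these are in place, the argument is a one-line application of \eqref{a1}.
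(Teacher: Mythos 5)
Your proof is correct, but it takes a genuinely more direct route than the paper for the second (general) inequality. The paper first proves the semistable case exactly as you would (adjunction applied to a semistable subbundle $V\subset f_*E$, then \eqref{a1}), and then deduces the general case by pushing forward the Harder--Narasimhan filtration $0=E_0\subset E_1\subset\cdots\subset E_n=E$ to a filtration $f_*E_0\subset\cdots\subset f_*E_n$ of $f_*E$, bounding $\mu_{\rm max}$ of each graded quotient $f_*(E_i/E_{i-1})$ by the semistable case, and then bounding $\mu_{\rm max}(f_*E)$ by the maximum over the graded pieces. You bypass all of that by using \eqref{a1} at full strength: since \eqref{a1} bounds $\mu_{\rm min}$ of the source against $\mu_{\rm max}$ of the target for an arbitrary nonzero map, applying it to the adjoint $\varphi\colon f^*F\to E$ of the inclusion of the maximal semistable subbundle $F\subset f_*E$ gives $\deg(f)\cdot\mu_{\rm max}(f_*E)=\mu_{\rm min}(f^*F)\le\mu_{\rm max}(E)$ in one step, with no semistability hypothesis on $E$; the semistable case then follows since $\mu_{\rm max}(E)=\mu(E)$ there. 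The three supporting points you flag (adjunction is a bijection of Hom groups, semistability of $F$ pulls back by Remark \ref{rem1} because $f$ is separable, and $\deg(f^*F)=\deg(f)\deg(F)$ with rank preserved) are all valid and are the same ingredients the paper uses. What your approach buys is brevity; what the paper's buys is an explicit reduction to the semistable case via the pushed-forward filtration, which makes the mechanism more transparent but is logically unnecessary here.
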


\begin{proof}
Let $E$ be any vector bundle on $X$.
The coherent sheaf $f_*E$ on $Y$ is locally free, because it is torsion-free. We have
\begin{equation}\label{e1}
H^0(Y, \, {\rm Hom}(F,\,f_*E))\, \cong\, H^0(X, \, {\rm Hom}(f^*F,\,E))
\end{equation}
for any vector bundle $F$ on $Y$; see \cite[p.~110]{Ha}.
Setting $F$ in \eqref{e1} to be a semistable
subbundle $V$ of $f_*E$ we see that $H^0(X, \, {\rm Hom}(f^*V,\,E))\, \not=\, 0$.
The pullback $f^*V$ is semistable because $V$ is semistable and $f$ is separable;
see Remark \ref{rem1}.

First take $E$ to be semistable. Hence for any
nonzero homomorphism $\beta\, :\, f^*V\,\longrightarrow\, E$,
\begin{equation}\label{re1}
{\rm deg}(f)\cdot\mu(V) \,=\, \mu(f^*V)\, \leq\, \mu(E)
\end{equation}
(see \eqref{a1}). Now setting $V$ in \eqref{re1} to be the maximal semistable subbundle of
$f_*E$ we conclude that
\begin{equation}\label{e2}
\mu_{\rm max}(f_*E)\, \leq\, \mu(E)/{\rm deg}(f).
\end{equation}

To prove the general (second) statement, for any vector bundle $E$ on $X$, let
$$0\,=\, E_0\, \subset\, E_1\, \subset\, \cdots \, \subset\, E_{n-1}\, \subset\, E_n\, =\, E$$
be the Harder--Narasimhan filtration of $E$ \cite[p.~16, Theorem 1.3.4]{HL}. Consider the filtration of subbundles
\begin{equation}\label{e3}
0\,=\, f_*E_0\, \subset\, f_*E_1\, \subset\, \cdots \, \subset\, f_*E_{n-1}\, \subset\, f_*E_n\, =\, f_*E\, .
\end{equation}
{}From \eqref{e2} we know that
$$
\mu_{\rm max}((f_*E_i)/(f_*E_{i-1})) \,=\, \mu_{\rm max}(f_*(E_i/E_{i-1}))
\, \leq\, \mu(E_i/E_{i-1})/{\rm deg}(f)
$$
\begin{equation}\label{re2}
\leq\, \mu(E_1)/{\rm deg}(f)\,=\,\mu_{\rm max}(E)/{\rm deg}(f)
\end{equation}
for all $1\, \leq\, i\, \leq\, n$. Observe that using \eqref{a1} and the filtration in \eqref{e3} it
follows that
$$
\mu_{\rm max}(f_*E)\,\leq\, {\rm Max}\{\mu_{\rm max}((f_*E_i)/(f_*E_{i-1}))\}_{i=1}^n,
$$
while from \eqref{re2} we have
$$
{\rm Max}\{\mu_{\rm max}((f_*E_i)/(f_*E_{i-1}))\}_{i=1}^n
\, \leq\, \mu_{\rm max}(E)/{\rm deg}(f).$$
Therefore, $\mu_{\rm max}(f_*E)\,\leq\,\mu_{\rm max}(E)/{\rm deg}(f)$, and
this completes the proof.
\end{proof}

The following lemma characterizes the \'etale maps among the separable morphisms.

\begin{lemma}\label{etalevsdegree0}
Let $f\,:\,X\,\longrightarrow\, Y$ be a nonconstant separable morphism between irreducible
smooth projective curves. Then the following three conditions are equivalent:
\begin{enumerate}
\item The map $f$ is \'etale.

\item The degree of $f_*{\mathcal O}_X$ is zero.

\item The vector bundle $f_*{\mathcal O}_X$ is semistable.
 \end{enumerate}
\end{lemma}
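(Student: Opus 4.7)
The plan is to use two inputs: Riemann--Hurwitz (which controls the degree of $f_*\mathcal{O}_X$ exactly) and Lemma \ref{le1} (which controls its maximal slope). Together with the natural inclusion $\mathcal{O}_Y \hookrightarrow f_*\mathcal{O}_X$ coming from adjunction, these three ingredients collapse the three conditions onto each other.

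First I would establish the equivalence (1) $\iff$ (2) purely numerically. Using $\chi(X,\mathcal{O}_X) = \chi(Y, f_*\mathcal{O}_X)$ together with Riemann--Roch, one obtains
\[
\deg(f_*\mathcal{O}_X) \;=\; (1-g_X) - \deg(f)\cdot(1-g_Y),
\]
and substituting the Riemann--Hurwitz identity $2g_X - 2 = \deg(f)(2g_Y-2) + \deg(R)$ (valid because $f$ is separable, so that the ramification divisor $R$ is well defined) yields $\deg(f_*\mathcal{O}_X) = -\deg(R)/2$. Since $R$ is effective, this is $\leq 0$, with equality iff $R=0$, i.e.\ iff $f$ is \'etale. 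This gives (1) $\iff$ (2).

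Next, for (2) $\Rightarrow$ (3): apply Lemma \ref{le1} with $E=\mathcal{O}_X$ to get
\[
\mu_{\rm max}(f_*\mathcal{O}_X)\;\leq\;\mu(\mathcal{O}_X)/\deg(f)\;=\;0.
\]
Assuming (2), the bundle $f_*\mathcal{O}_X$ has slope $0$ as well, so $\mu_{\rm max}(f_*\mathcal{O}_X)\leq \mu(f_*\mathcal{O}_X)$, which forces the Harder--Narasimhan filtration of $f_*\mathcal{O}_X$ to be trivial; hence $f_*\mathcal{O}_X$ is semistable.

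Finally, for (3) $\Rightarrow$ (2): the adjunction unit gives a nonzero injection $\mathcal{O}_Y \hookrightarrow f_*\mathcal{O}_X$, so semistability of $f_*\mathcal{O}_X$ forces $0 = \mu(\mathcal{O}_Y) \leq \mu(f_*\mathcal{O}_X)$. Combining with the bound $\mu(f_*\mathcal{O}_X) = \mu_{\rm max}(f_*\mathcal{O}_X) \leq 0$ from Lemma \ref{le1} gives $\deg(f_*\mathcal{O}_X)=0$, which is (2). There is no real obstacle here; the only mild subtlety is to record that separability of $f$ is needed both to invoke Lemma \ref{le1} and to have a well-defined ramification divisor, so that Riemann--Hurwitz applies in its standard form.
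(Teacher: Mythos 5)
Your proof is correct. The equivalence of (2) and (3) is argued with exactly the same ingredients as in the paper: the inclusion ${\mathcal O}_Y \subset f_*{\mathcal O}_X$ together with the bound $\mu_{\rm max}(f_*{\mathcal O}_X) \le 0$ from Lemma \ref{le1}. (The paper packages these two facts once and for all as $\mu_{\rm max}(f_*{\mathcal O}_X) = 0$, equation \eqref{y1}, and then reads off the equivalence; you run the two implications separately, but the substance is identical.) Where you genuinely diverge is in (1) $\Leftrightarrow$ (2): the paper invokes the determinant formula $(\det f_*{\mathcal O}_X)^{\otimes 2} = {\mathcal O}_Y(-B)$ with $B = f_*{\mathcal R}$ (citing Hartshorne, Ch.~IV, Ex.~2.6(d), and Serre), whereas you compute $\deg(f_*{\mathcal O}_X)$ from $\chi(X,{\mathcal O}_X) = \chi(Y, f_*{\mathcal O}_X)$ via Riemann--Roch and then substitute Riemann--Hurwitz to get $\deg(f_*{\mathcal O}_X) = -\deg(R)/2$. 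The two routes yield the same identity (taking degrees in the determinant formula gives $2\deg(f_*{\mathcal O}_X) = -\deg(B) = -\deg({\mathcal R})$), but yours is more elementary and self-contained, using only the invariance of the Euler characteristic under finite pushforward rather than the finer statement about determinant line bundles. Your closing remark correctly isolates where separability enters: it makes the ramification divisor (defined via lengths of $\Omega_{X/Y}$, so that wild ramification is accounted for) well defined with $R = 0$ precisely when $f$ is \'etale, and it is also the hypothesis needed to apply Lemma \ref{le1}.
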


\begin{proof}
We have $\mu_{\rm max}(f_*{\mathcal O}_X)\, \geq\, 0$, because ${\mathcal O}_Y\, \subset\,
f_*{\mathcal O}_X$ (see \eqref{e1} and \eqref{a1}). On the other hand, from
Lemma~\ref{le1} it follows that $\mu_{\rm max}(f_*{\mathcal O}_X)\, \leq\, 0$,
so
\begin{equation}\label{y1}
\mu_{\rm max}(f_*{\mathcal O}_X)\, =\, 0\, .
\end{equation}
{}From \eqref{y1} it follows immediately that
$f_*{\mathcal O}_X$ is semistable if and only if $\text{deg}(f_*{\mathcal O}_X)\,=\, 0$.
Therefore, statements (2) and (3) are equivalent.

Let ${\mathcal R}$ be the ramification divisor on $X$ for $f$; define the effective divisor $B\,:=\,f_*{\mathcal R}$
on $Y$. We know that
$$
(\det f_*{\mathcal O}_X)^{\otimes 2}\, =\, {\mathcal O}_Y(-B)
$$
(see \cite[p.~306, Ch.~IV, Ex.~2.6(d)]{Ha}, \cite{Se}). Therefore, $f$ is \'etale (meaning $B\,=\, 0$) if and
only if ${\rm deg}(f_*{\mathcal O}_X) \,=\, 0$. So statements (1) and (2) are equivalent.
\end{proof}

Let $f\,:\,X\,\longrightarrow\, Y$ be a nonconstant separable morphism between irreducible
smooth projective curves.
The algebra structure of ${\mathcal O}_X$ produces an ${\mathcal O}_Y$--algebra structure on
the direct image $f_*{\mathcal O}_X$.

\begin{lemma}\label{maximaletalsubcover}
Let ${\mathcal V}\, \subset\, f_*{\mathcal O}_X$ be the maximal semistable subbundle.
Then ${\mathcal V}$ is a sheaf of ${\mathcal O}_Y$--subalgebras of $f_*{\mathcal O}_X$. 
\end{lemma}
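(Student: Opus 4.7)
The plan is to verify the two defining conditions for $\mathcal{V}$ to be an $\mathcal{O}_Y$-subalgebra of $f_*\mathcal{O}_X$: it must contain the unit, and it must be closed under the algebra multiplication.

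First I would show that $\mathcal{O}_Y \subset \mathcal{V}$. The canonical inclusion $\mathcal{O}_Y \hookrightarrow f_*\mathcal{O}_X$ exhibits $\mathcal{O}_Y$ as a subbundle of slope $0$, while \eqref{y1} (established in the proof of Lemma \ref{etalevsdegree0}) gives $\mu_{\max}(f_*\mathcal{O}_X) = 0$. Hence $\mathcal{O}_Y$ is a semistable subbundle of $f_*\mathcal{O}_X$ of maximal slope, and since the Harder--Narasimhan filtration realizes $\mathcal{V}$ as the unique largest semistable subbundle of maximal slope, $\mathcal{O}_Y \subset \mathcal{V}$.

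For closure under multiplication, write $m \colon f_*\mathcal{O}_X \otimes_{\mathcal{O}_Y} f_*\mathcal{O}_X \to f_*\mathcal{O}_X$ for the algebra multiplication and $\pi \colon f_*\mathcal{O}_X \to f_*\mathcal{O}_X/\mathcal{V}$ for the quotient. It is enough to show that the composite
$$
\phi \,:=\, \pi\circ m|_{\mathcal{V}\otimes \mathcal{V}} \,\colon\, \mathcal{V}\otimes \mathcal{V} \,\longrightarrow\, f_*\mathcal{O}_X/\mathcal{V}
$$
vanishes, as then $m$ restricts to $\mathcal{V}\otimes \mathcal{V} \to \mathcal{V}$. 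The Harder--Narasimhan property of $\mathcal{V}$ gives $\mu_{\max}(f_*\mathcal{O}_X/\mathcal{V}) < \mu(\mathcal{V}) = 0$. On the other hand, $\mathcal{V}$ is semistable of slope $0$, and (in the setting at hand) so is $\mathcal{V}\otimes \mathcal{V}$, so $\mu_{\min}(\mathcal{V}\otimes \mathcal{V}) = 0$. If $\phi$ were nonzero, \eqref{a1} would force $\mu_{\max}(f_*\mathcal{O}_X/\mathcal{V}) \geq \mu_{\min}(\mathcal{V}\otimes \mathcal{V}) = 0$, contradicting the previous inequality. Therefore $\phi = 0$, and $\mathcal{V}$ is closed under multiplication.

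The subtle step is the assertion that $\mathcal{V}\otimes \mathcal{V}$ is semistable of slope $0$. Over a field of characteristic zero this is automatic from the semistability of tensor products on curves, but in positive characteristic it is not a formal consequence of semistability of $\mathcal{V}$ and is where I expect the main work to lie. I would handle it either by a strong-semistability/Frobenius argument, or by a Galois-cover reduction in the spirit of Remark \ref{rem1} (using a cover that makes the relevant maximal-destabilizing subsheaf Galois invariant and so descends), or by directly bounding $\mu_{\max}\bigl(\mathcal{V}^\vee \otimes (f_*\mathcal{O}_X/\mathcal{V})\bigr) < 0$ and rephrasing the vanishing of $\phi$ via the adjoint map $\mathcal{V} \to \mathcal{V}^\vee \otimes (f_*\mathcal{O}_X/\mathcal{V})$.
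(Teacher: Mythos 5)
Your reduction is the same as the paper's: to show closure under multiplication it suffices, via \eqref{a1} and $\mu_{\rm max}((f_*{\mathcal O}_X)/{\mathcal V})<0$, to prove that ${\mathcal V}\otimes{\mathcal V}$ is semistable of degree zero, and you correctly flag that this semistability is the entire difficulty in positive characteristic. But the proposal stops exactly there: none of the three strategies you list is carried out, and at least two of them are problematic. A strong-semistability/Frobenius argument would require knowing that ${\mathcal V}$ is strongly semistable, which is not available at this stage (it essentially follows from the lemma itself, since the lemma is what lets one realize ${\mathcal V}$ as $g_*{\mathcal O}_{\widetilde Y}$ for an \'etale cover $g$); and your third suggestion, bounding $\mu_{\rm max}\bigl({\mathcal V}^\vee\otimes(f_*{\mathcal O}_X/{\mathcal V})\bigr)$, is just the same tensor-product semistability problem in a different guise. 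So as written there is a genuine gap at the crux of the argument.

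The paper closes this gap without any tensor-product semistability theorem, by exploiting the special shape of the bundle at hand. Since $\deg({\mathcal V}\otimes{\mathcal V})=0$, it is enough to rule out subsheaves of positive degree, and for that it suffices to show $\mu_{\rm max}\bigl((f_*{\mathcal O}_X)\otimes(f_*{\mathcal O}_X)\bigr)\le 0$, since ${\mathcal V}\otimes{\mathcal V}$ sits inside this larger bundle. The projection formula identifies $(f_*{\mathcal O}_X)\otimes_{{\mathcal O}_Y}(f_*{\mathcal O}_X)$ with $f_*\bigl(f^*(f_*{\mathcal O}_X)\bigr)$; separability gives $\mu_{\rm max}\bigl(f^*(f_*{\mathcal O}_X)\bigr)=\deg(f)\cdot\mu_{\rm max}(f_*{\mathcal O}_X)=0$ by \eqref{y1}; and the second part of Lemma \ref{le1} then yields $\mu_{\rm max}\bigl(f_*(f^*(f_*{\mathcal O}_X))\bigr)\le 0$. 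This is the step your proposal is missing. (Your verification that ${\mathcal O}_Y\subset{\mathcal V}$ is fine and is a point the paper leaves implicit.)
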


\begin{proof}
The action of ${\mathcal O}_Y$ on $f_*{\mathcal O}_X$ is the standard one. Let
$$
{\mathbf m}\, :\, (f_*{\mathcal O}_X)\otimes (f_*{\mathcal O}_X)\, \longrightarrow\, f_*{\mathcal O}_X
$$
be the ${\mathcal O}_Y$--algebra structure on the direct image $f_*{\mathcal O}_X$
given by the algebra structure of the coherent sheaf ${\mathcal O}_X$. We need to show that
\begin{equation}\label{e4}
{\mathbf m}({\mathcal V}\otimes {\mathcal V})\, \subset\, {\mathcal V}\, ,
\end{equation}
where $\mathcal V$ is the maximal semistable subbundle of $f_*{\mathcal O}_X$.

Since $\mathcal V$ is semistable of degree zero
(see \eqref{y1}), and $\mu_{\rm max}((f_*{\mathcal O}_X)/{\mathcal V})\, <\, 0$,
using \eqref{a1} we conclude that in order to prove \eqref{e4} it suffices to show that
${\mathcal V}\otimes {\mathcal V}$ is semistable of degree zero. Indeed, there
is no nonzero homomorphism from ${\mathcal V}\otimes {\mathcal V}$ to $(f_*{\mathcal O}_X)/{\mathcal V}$,
if ${\mathcal V}\otimes {\mathcal V}$ is semistable of degree zero.

We have $\text{deg}({\mathcal V}\otimes {\mathcal V})\,=\, 0$, because
$\text{deg}({\mathcal V})\,=\, 0$. So ${\mathcal V}\otimes {\mathcal V}$ is semistable if
it does not contain any coherent subsheaf of positive degree. As
$${\mathcal V}\otimes {\mathcal V}\, \subset\, (f_*{\mathcal O}_X)\otimes (f_*{\mathcal O}_X)\, ,$$
if ${\mathcal V}\otimes {\mathcal V}$ contains a subsheaf of positive degree, then
$(f_*{\mathcal O}_X)\otimes (f_*{\mathcal O}_X)$ also contains a subsheaf of positive degree.

Therefore, to prove the lemma it is enough to show that $(f_*{\mathcal O}_X)\otimes (f_*{\mathcal O}_X)$
does not contain any subsheaf of positive degree.

The projection formula, \cite[p.~124, Ch.~II, Ex.~5.1(d)]{Ha}, \cite{Se}, says that
\begin{equation}\label{y3}
(f_*{\mathcal O}_X)\otimes_{{\mathcal O}_Y} (f_*{\mathcal O}_X) \,\cong\, f_*(f^*(f_*{\mathcal O}_X))\,.
\end{equation}
Since ${\mathcal O}_Y\, \subset\, f_*{\mathcal O}_X$, we have
$$
{\mathcal O}_Y\,=\, {\mathcal O}_Y\otimes_{{\mathcal O}_Y} {\mathcal O}_Y\, \subset\,
(f_*{\mathcal O}_X)\otimes_{{\mathcal O}_Y} (f_*{\mathcal O}_X)\, ,
$$
and hence $\mu_{\rm max}((f_*{\mathcal O}_X)\otimes_{{\mathcal O}_Y} (f_*{\mathcal O}_X))\, \geq\, 0$.
Now from \eqref{y3} it follows that
\begin{equation}\label{y2}
\mu_{\rm max}(f_*(f^*(f_*{\mathcal O}_X)))\, \geq\, 0\, .
\end{equation}

Since $f$ is separable, the pullback, by $f$, of a semistable bundle on $Y$ is semistable (see
Remark \ref{rem1}), and consequently
the Harder--Narasimhan filtration of $f^*F$ is the pullback, by $f$, of the
Harder--Narasimhan filtration of $F$. Therefore, from \eqref{y1} it follows that
$$\mu_{\rm max}(f^*(f_*{\mathcal O}_X)) \,=\, 0\, .$$
Now applying the second part of Lemma \ref{le1},
$$0\,=\, \mu_{\rm max}(f^*(f_*{\mathcal O}_X))/\text{deg}(f)
\,\geq \, \mu_{\rm max}(f_*(f^*(f_*{\mathcal O}_X)))\, .$$
This and \eqref{y2} together imply that
$$
\mu_{\rm max}(f_*(f^*(f_*{\mathcal O}_X)))\, =\, 0\, .
$$
Therefore, using \eqref{y3} it follows that
$$
\mu_{\rm max}((f_*{\mathcal O}_X)\otimes (f_*{\mathcal O}_X))\,=\, 0\, .
$$
Hence $(f_*{\mathcal O}_X)\otimes (f_*{\mathcal O}_X)$ does
not contain any subsheaf of positive degree. It was shown earlier that the lemma follows from the
statement that $(f_*{\mathcal O}_X)\otimes (f_*{\mathcal O}_X)$ does
not contain any subsheaf of positive degree.
\end{proof}

\begin{definition}\label{def1}
A nonconstant separable morphism $f\,:\,X\,\longrightarrow\, Y$ between irreducible smooth projective curves is
called {\em genuinely ramified} if 
${\mathcal O}_Y$ is the maximal semistable subbundle of $f_*{\mathcal O}_X$. 
\end{definition}

\begin{proposition}\label{genuinerampi1surj}
Let $f\,:\,X\,\longrightarrow\, Y$ be a nonconstant separable morphism between irreducible smooth projective curves.
Then the following three conditions are equivalent:
\begin{enumerate}
\item The map $f$ is genuinely ramified.

\item The map $f$ does not factor through any nontrivial \'etale cover of $Y$ (in particular, $f$ is not
nontrivial \'etale).

\item The homomorphism between \'etale fundamental groups induced by $f$
$$f_*\, :\, \pi_1^{\rm et}(X)\,\longrightarrow\, \pi_1^{\rm et}(Y)$$
is surjective. 
\end{enumerate}
\end{proposition}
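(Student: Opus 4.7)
The plan is to establish $(1)\Leftrightarrow(2)$ via the relative spectrum of the maximal semistable subbundle $\mathcal{V}\subset f_*\mathcal{O}_X$ (which is an $\mathcal{O}_Y$-subalgebra by Lemma \ref{maximaletalsubcover}), and then to deduce $(2)\Leftrightarrow(3)$ from the Galois correspondence for $\pi_1^{\rm et}(Y)$.

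For $(1)\Rightarrow(2)$ I argue by contraposition: if $f=g\circ h$ with $g\,:\,Z\,\longrightarrow\, Y$ a nontrivial connected finite \'etale cover, then $g_*\mathcal{O}_Z\,\subset\, f_*\mathcal{O}_X$ is an $\mathcal{O}_Y$-subalgebra which, by Lemma \ref{etalevsdegree0}, is semistable of degree zero and of rank $\deg(g)>1$. Hence it lies inside the maximal semistable subbundle of $f_*\mathcal{O}_X$, which therefore strictly contains $\mathcal{O}_Y$.

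For $(2)\Rightarrow(1)$ I again argue by contraposition. Assume $\mathcal{V}\supsetneq \mathcal{O}_Y$. By Lemma \ref{maximaletalsubcover}, $\mathcal{V}$ is an $\mathcal{O}_Y$-subalgebra of $f_*\mathcal{O}_X$, and its generic fibre is a finite $K(Y)$-subalgebra of $K(X)$ strictly containing $K(Y)$. Since $K(X)/K(Y)$ is separable, this generic fibre is an intermediate field $K(Y)\subsetneq L\subseteq K(X)$. Let $g\,:\,Y'\,\longrightarrow\, Y$ be the normalization of $Y$ in $L$; then $Y'$ is a smooth projective curve, $f$ factors as $f=g\circ h$ for some $h\,:\,X\,\longrightarrow\, Y'$, and $g_*\mathcal{O}_{Y'}$ (the integral closure of $\mathcal{O}_Y$ in $L$) sits between $\mathcal{V}$ and $f_*\mathcal{O}_X$. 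The crucial claim $\mathcal{V}=g_*\mathcal{O}_{Y'}$ follows from a two-sided degree bound: the torsion quotient $g_*\mathcal{O}_{Y'}/\mathcal{V}$ has nonnegative length, so $\deg(g_*\mathcal{O}_{Y'})\geq \deg(\mathcal{V})=0$, while the inclusion $g_*\mathcal{O}_{Y'}\subset f_*\mathcal{O}_X$ together with \eqref{y1} forces $\mu(g_*\mathcal{O}_{Y'})\leq 0$. Hence $\deg(g_*\mathcal{O}_{Y'})=0$ and the quotient vanishes, so Lemma \ref{etalevsdegree0} applied to $g$ shows that $g$ is \'etale; it is nontrivial because $\mathcal{V}\neq \mathcal{O}_Y$.

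Finally, $(2)\Leftrightarrow(3)$ is standard Galois theory of the \'etale fundamental group: connected finite \'etale covers $Y'\to Y$ through which $f$ factors correspond to proper open subgroups of $\pi_1^{\rm et}(Y)$ containing the image of $f_*$, so the absence of such nontrivial factorizations means the image lies in no proper open subgroup, i.e.\ is dense; since $f_*$ is a continuous homomorphism between profinite groups it has closed image, so density is the same as surjectivity. I expect the main obstacle to be the identification $\mathcal{V}=g_*\mathcal{O}_{Y'}$ in $(2)\Rightarrow(1)$: one must locate $g_*\mathcal{O}_{Y'}$ as an intermediate sheaf between $\mathcal{V}$ and $f_*\mathcal{O}_X$ and exploit both the semistability of $\mathcal{V}$ at the bottom and the bound $\mu_{\rm max}(f_*\mathcal{O}_X)=0$ at the top in order to squeeze the integral closure back down to $\mathcal{V}$ itself.
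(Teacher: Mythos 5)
Your proof is correct and follows essentially the same route as the paper: both hinge on Lemma \ref{maximaletalsubcover} (the maximal semistable subbundle $\mathcal{V}$ is an $\mathcal{O}_Y$-subalgebra) together with the degree-zero criterion of Lemma \ref{etalevsdegree0}, and the equivalence $(2)\Leftrightarrow(3)$ is the standard Galois correspondence that the paper leaves implicit. The only cosmetic difference is that you realize the intermediate cover as the normalization of $Y$ in the generic fibre of $\mathcal{V}$ and then identify $g_*\mathcal{O}_{Y'}$ with $\mathcal{V}$ by a two-sided degree count, whereas the paper takes ${\rm Spec}\,\mathcal{V}$ directly; these yield the same cover (and your route has the small advantage of making smoothness of the intermediate curve automatic).
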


\begin{proof} 
(1)~$\Longrightarrow$~(2): If $f$ factors through a nontrivial \'etale covering $g\,:\,{\widetilde Y}
\,\longrightarrow\, Y$, then $g_*{\mathcal O}_{\widetilde Y}$ is semistable of degree zero (see
Lemma \ref{etalevsdegree0})
and its rank coincides with the degree of $g$. Since
$$g_*{\mathcal O}_{\widetilde Y}\, \subset\, f_*{\mathcal O}_X\, ,$$ this implies that $f$ is not genuinely ramified.

(2)~$\Longrightarrow$~(1): Lemma \ref{maximaletalsubcover} says that the maximal semistable subbundle
${\mathcal V}\, \subset\, f_*{\mathcal O}_X$ is a subalgebra. If $f$ is not genuinely ramified, then
by taking the spectrum of $\mathcal V$ we obtain a separable, possibly ramified, covering map
\begin{equation}\label{g}
g\,:\,{\widetilde Y}\,=\, {\rm Spec}\, {\mathcal V}
\,\longrightarrow\, Y
\end{equation}
whose degree coincides with the rank of $\mathcal V$. We have $g_*{\mathcal O}_{\widetilde Y}\,=\, {\mathcal V}$,
and the inclusion
map ${\mathcal V}\, \hookrightarrow\, f_*{\mathcal O}_X$ defines a map $$h\, :\, X\, \longrightarrow\,
{\widetilde Y}$$ such that
\begin{equation}\label{re3}
g\circ h\,=\, f.
\end{equation}
Since $f$ is separable, from \eqref{re3}
it follows that $g$ is also separable. It can be shown that $g$ is \'etale. To prove this, first
note that $g_*{\mathcal O}_{\widetilde Y}$ is
semistable, because $g_*{\mathcal O}_{\widetilde Y}\,=\, {\mathcal V}$ and
${\mathcal V}$ is semistable. Next, from \eqref{y1} and the semistability of $\mathcal V$ it follows
that $\mu(g_*{\mathcal O}_{\widetilde Y})\,=\,\mu_{\rm max}(g_*{\mathcal O}_{\widetilde Y})
\,=\, 0$. Now Lemma \ref{etalevsdegree0} gives that the map $g$ in \eqref{g} is \'etale.

Since $g$ is \'etale, and \eqref{re3} holds, we conclude that the statement (2) fails. Hence
the statement (2) implies the statement (1).

The equivalence between the statements (2) and (3) follows from the definition of the \'etale fundamental group.
\end{proof}

Let $f\,:\,X\,\longrightarrow\, Y$ be a nonconstant separable morphism between irreducible smooth projective
curves. Let
$$
g\, :\, {\widetilde Y}\,:=\, {\rm Spec}\, {\mathcal V}\, \longrightarrow\, Y
$$
be the \'etale covering corresponding to the maximal semistable subbundle ${\mathcal V}\, \subset\,
f_*{\mathcal O}_X$ (see \eqref{g}; it was shown that the map in \eqref{g} is \'etale). Let
\begin{equation}\label{h}
h\, :\, X\,\longrightarrow\, {\widetilde Y}
\end{equation}
be the morphism given by the inclusion map ${\mathcal V}\, \hookrightarrow\,
f_*{\mathcal O}_X$.

\begin{corollary}\label{cor1}
The map $h$ in \eqref{h} is genuinely ramified.
\end{corollary}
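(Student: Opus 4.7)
The plan is to argue by contradiction: assuming $h$ is not genuinely ramified, I will construct a semistable subbundle of $f_*{\mathcal O}_X$ of slope zero whose rank strictly exceeds that of $\mathcal V$, thus violating the maximality of $\mathcal V$ as the first step of the Harder--Narasimhan filtration of $f_*{\mathcal O}_X$.

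Before starting, I would check that $h$ is a nonconstant separable morphism between irreducible smooth projective curves, so that the previous lemmas and Definition \ref{def1} apply to it. Since $f\,=\,g\circ h$ is nonconstant and $g$ is finite, $h$ must be dominant; hence $\widetilde Y$ is irreducible as the closure of the image of the irreducible $X$, and it is smooth because $g$ is \'etale. Separability of $h$ follows from that of $f$ since $g$ is \'etale.

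Now suppose, for contradiction, that $h$ is not genuinely ramified, and let $\mathcal V'\,\subset\, h_*{\mathcal O}_X$ be its maximal semistable subbundle; then $\mathrm{rk}(\mathcal V')\,>\, 1$, and applying \eqref{y1} to $h$ gives $\mu(\mathcal V')\,=\,0$. The key technical step is to show that the subbundle
$$
g_*\mathcal V'\,\subset\, g_*h_*{\mathcal O}_X\,=\, f_*{\mathcal O}_X
$$
is again semistable of slope zero. For the slope bound, I would invoke Lemma \ref{le1} applied to $g$, which yields $\mu_{\max}(g_*\mathcal V')\,\leq\, \mu_{\max}(\mathcal V')/\deg(g)\,=\,0$. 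For the degree, I would use that $g$ is \'etale: Riemann--Hurwitz combined with the finiteness of $g$ forces $\deg(g_*\mathcal V')\,=\,\deg(\mathcal V')\,=\,0$. A torsion-free coherent sheaf of slope zero with $\mu_{\max}\,\leq\,0$ is necessarily semistable of slope zero.

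To close the argument, since $\mathcal V$ is the first step of the Harder--Narasimhan filtration of $f_*{\mathcal O}_X$ and has slope $0\,=\,\mu(g_*\mathcal V')$, any semistable subbundle of $f_*{\mathcal O}_X$ of slope zero must land inside $\mathcal V$ (using that there are no nonzero maps from a semistable sheaf of slope $s$ to a sheaf with $\mu_{\max}\,<\,s$, applied to $g_*\mathcal V'\,\to\, (f_*{\mathcal O}_X)/\mathcal V$). Thus $g_*\mathcal V'\,\subset\,\mathcal V$, but
$$
\mathrm{rk}(g_*\mathcal V')\,=\,\deg(g)\cdot\mathrm{rk}(\mathcal V')\,=\,\mathrm{rk}(\mathcal V)\cdot\mathrm{rk}(\mathcal V')\,>\,\mathrm{rk}(\mathcal V),
$$
a contradiction. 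The step I expect to require the most care is the clean verification that $g_*\mathcal V'$ has degree exactly zero (rather than merely nonpositive slope): this is precisely where the \'etaleness of $g$ is used in an essential way, and it is the hinge on which the whole argument turns.
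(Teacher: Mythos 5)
Your proof is correct, and it reaches the conclusion by a somewhat different route than the paper. The paper uses the factorization criterion of Proposition \ref{genuinerampi1surj}(2): it assumes $h\,=\,\beta\circ\gamma$ with $\beta$ \'etale, notes that $g\circ\beta$ is then \'etale so that $(g\circ\beta)_*{\mathcal O}_Z$ is semistable of degree zero by Lemma \ref{etalevsdegree0}, and squeezes it between ${\mathcal V}\,=\,g_*{\mathcal O}_{\widetilde Y}$ and $f_*{\mathcal O}_X$ to force $\deg(\beta)\,=\,1$. You instead verify Definition \ref{def1} directly: you push forward the maximal semistable subbundle ${\mathcal V}'\,\subset\,h_*{\mathcal O}_X$ and show $g_*{\mathcal V}'$ is a semistable slope-zero subsheaf of $f_*{\mathcal O}_X$, hence contained in ${\mathcal V}$, and then derive a contradiction from $\mathrm{rk}(g_*{\mathcal V}')\,=\,\deg(g)\cdot\mathrm{rk}({\mathcal V}')\,>\,\deg(g)\,=\,\mathrm{rk}({\mathcal V})$. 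Both arguments hinge on the same maximality of ${\mathcal V}$; the difference is that the paper gets the needed semistability and degree statements for free from Lemma \ref{etalevsdegree0} because the relevant sheaf is the structure-sheaf pushforward of an \'etale cover, whereas you must establish them for the pushforward of an arbitrary semistable bundle under the \'etale map $g$, which costs you Lemma \ref{le1} for the bound $\mu_{\rm max}(g_*{\mathcal V}')\,\leq\,0$ together with the degree formula $\deg(g_*W)\,=\,\deg(W)$ for \'etale $g$ (which the paper itself uses in Proposition \ref{prop1}, citing \cite[Ch.~IV, Ex.~2.6]{Ha}); your observation that slope zero plus $\mu_{\rm max}\,\leq\,0$ forces semistability is the right way to close that step. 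The only point worth making explicit is why $\mathrm{rk}({\mathcal V}')\,>\,1$: since $\mu_{\rm max}((h_*{\mathcal O}_X)/{\mathcal V}')\,<\,0$, the inclusion ${\mathcal O}_{\widetilde Y}\,\subset\,h_*{\mathcal O}_X$ factors through ${\mathcal V}'$, and a rank-one degree-zero sheaf containing ${\mathcal O}_{\widetilde Y}$ equals ${\mathcal O}_{\widetilde Y}$, so $h$ not genuinely ramified indeed forces $\mathrm{rk}({\mathcal V}')\,\geq\,2$. Your approach has the small advantage of not invoking Proposition \ref{genuinerampi1surj} or the algebra structure of ${\mathcal V}'$ at all; the paper's is shorter given the machinery already in place.
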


\begin{proof}
Let $\beta \, :\, Z\, \longrightarrow\, {\widetilde Y}$ be an \'etale covering such that
there is a map $$\gamma\, :\, X\,\longrightarrow\, Z$$ satisfying the condition
$\beta\circ\gamma\,=\, h$. Since $(g\circ\beta)\circ\gamma\,=\, f$, we have
\begin{equation}\label{a2}
g_*{\mathcal O}_{\widetilde Y}\, \subset\,
(g\circ\beta)_*{\mathcal O}_Z\, \subset\, f_*{\mathcal O}_X\, ;
\end{equation}
also, we have $\text{deg}((g\circ\beta)_*{\mathcal O}_{\widetilde Y})\,=\, 0$, because $g\circ\beta$
is \'etale (see Lemma \ref{etalevsdegree0}). But ${\mathcal V}\,=\, g_*{\mathcal O}_{\widetilde Y}$
is the maximal semistable subsheaf of $f_*{\mathcal O}_X$. Hence from \eqref{a2} it follows that
$g_*{\mathcal O}_{\widetilde Y}\, =\, (g\circ\beta)_*{\mathcal O}_Z$. This implies that
$\text{deg}(\beta)\,=\,1$. Therefore, from Proposition \ref{genuinerampi1surj} we conclude that
the map $h$ in \eqref{h} is genuinely ramified.
\end{proof}

\section{Properties of genuinely ramified morphisms}

\begin{lemma}\label{genuinerampi1surjcurves}
Let $f\,:\,C\, \longrightarrow\, D$ be a nonconstant separable morphism between irreducible
smooth projective curves. Then the following three conditions are equivalent:
\begin{enumerate}
\item The map $f$ is genuinely ramified. 

\item $\dim H^0(C,\, f^*f_*{\mathcal O}_C)\,=\,1$.

\item The fiber product $C\times _D C$ is connected.
\end{enumerate}
\end{lemma}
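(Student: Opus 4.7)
For (2)$\,\Longleftrightarrow\,$(3), the projection formula together with flat base change (which applies because $f$ is finite and flat) yields
$$H^0(C,\, f^*f_*{\mathcal O}_C)\,\cong\, H^0(D,\, (f_*{\mathcal O}_C)\otimes (f_*{\mathcal O}_C))\,\cong\, H^0(C\times_D C,\, {\mathcal O}_{C\times_D C})\, .$$
Separability of $f$ makes the generic fiber ${\rm Spec}(k(C)\otimes_{k(D)}k(C))$ a product of separable field extensions of $k(C)$, so $C\times_D C$ is generically reduced; since it is finite flat over the smooth curve $C$, it is Cohen--Macaulay of pure dimension one with no embedded primes, hence reduced. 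For any reduced projective scheme $X$ over the algebraically closed field $k$, $\dim_k H^0(X,{\mathcal O}_X)$ equals the number of connected components of $X$, and so (2) is equivalent to (3).

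For (1)$\,\Longleftrightarrow\,$(3), I would choose a separable morphism $\pi:\widetilde C\,\longrightarrow\, C$ such that $\widetilde f := f\circ\pi:\widetilde C\to D$ is Galois with group $G$, and let $H\subseteq G$ be the subgroup with $\widetilde C/H=C$. Let $N\subseteq G$ be the subgroup generated by all the inertia subgroups $I_x\subseteq G$ for $x\in\widetilde C$; since $I_{g\cdot x}=g I_x g^{-1}$, $N$ is normal in $G$. By the Galois correspondence together with a local ramification computation, an intermediate cover $\widetilde C/K\to D$ with $H\subseteq K\subseteq G$ is \'etale if and only if $K\supseteq N$, and hence, by Proposition \ref{genuinerampi1surj}, $f$ is genuinely ramified if and only if $HN=G$. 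On the geometric side, set-theoretically $\widetilde C\times_D\widetilde C = \bigcup_{g\in G}\Gamma_g$, where $\Gamma_g=\{(x,g x):x\in\widetilde C\}$, and $\Gamma_{g_1}\cap\Gamma_{g_2}\neq\emptyset$ precisely when $g_1^{-1}g_2\in I_x$ for some $x$; so the connected components of $\widetilde C\times_D\widetilde C$ correspond bijectively to cosets of $N$ in $G$. The $H\times H$-action on $\widetilde C\times_D\widetilde C$ sends $\Gamma_g$ to $\Gamma_{h_2 g h_1^{-1}}$, and $C\times_D C = (\widetilde C\times_D\widetilde C)/(H\times H)$, so the connected components of $C\times_D C$ correspond to double cosets $\bar H\backslash(G/N)/\bar H$, where $\bar H$ is the image of $H$ in $G/N$. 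This set is a singleton precisely when $G=HN$, completing (1)$\,\Longleftrightarrow\,$(3).

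The main technical obstacle is the Galois-theoretic bookkeeping in the second step: precisely identifying the connected components of $\widetilde C\times_D\widetilde C$ through the intersection pattern of the graphs $\Gamma_g$ (controlled by the inertia subgroups), and transferring this analysis cleanly to the $H\times H$-quotient $C\times_D C$. One also needs the local ramification computation showing that $\widetilde C/K\to D$ is \'etale exactly when $K$ contains every inertia subgroup, and a check that the topological quotient by a finite group action identifies components with orbits.
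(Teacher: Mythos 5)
Your proof is correct, and the interesting part of it takes a genuinely different route from the paper. For (2)$\Leftrightarrow$(3) you and the paper do essentially the same thing (flat base change/projection formula identifying $H^0(C, f^*f_*{\mathcal O}_C)$ with $H^0(C\times_D C,\,{\mathcal O}_{C\times_D C})$); your extra verification that $C\times_D C$ is reduced (generic reducedness from separability plus $S_1$ from finite flatness over a smooth curve) is a point the paper leaves implicit, and it is genuinely needed for the implication ``connected $\Rightarrow$ $h^0=1$'' (though the paper's cyclic scheme of implications only uses the easy direction). Where you diverge is (1)$\Leftrightarrow$(3): the paper proves (1)$\Rightarrow$(2) sheaf-theoretically, using that the maximal semistable subbundle of $f^*f_*{\mathcal O}_C$ is $f^*{\mathcal V}$ and that the quotient has $\mu_{\rm max}<0$ hence no sections, and proves (3)$\Rightarrow$(1) by the contrapositive, factoring $f$ through the \'etale cover ${\rm Spec}\,{\mathcal V}$ and observing that the diagonal is a connected component of $\widetilde D\times_D\widetilde D$. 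You instead pass to the Galois closure $\widetilde C\to D$ with group $G$, let $N\trianglelefteq G$ be generated by the inertia subgroups, show that genuine ramification is equivalent to $HN=G$ (via the standard fact $|I_x|=e_x$ over an algebraically closed residue field, valid also in wild ramification), and identify the connected components of $C\times_D C$ with the double cosets $\bar H\backslash (G/N)/\bar H$. Both arguments ultimately rest on Proposition \ref{genuinerampi1surj}(2). The paper's route stays within the slope/Harder--Narasimhan formalism used throughout and avoids introducing the Galois closure at this stage; your route is more combinatorial and yields strictly more, namely an exact count of the components of $C\times_D C$ (equivalently of $\dim H^0(C,f^*f_*{\mathcal O}_C)$) as a number of double cosets, and it specializes in the Galois case to the clean statement that a Galois cover is genuinely ramified exactly when inertia generates the Galois group. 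The steps you flag as requiring care (the \'etale criterion $K\supseteq N$, the identification of components of the union $\bigcup_g\Gamma_g$ with cosets of $N$, and the topological identification of $C\times_D C$ with the $H\times H$-quotient) are all standard and go through as you describe.
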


\begin{proof}
Let ${\widetilde{C\times_D C}}$ be the normalization of the fiber product $C\times_D C$; it is
a smooth projective curve, but it is not connected unless $f$ is an isomorphism.
We have the commutative diagram 
\begin{equation}\label{d1}
\xymatrix{
\widetilde{C\times_D C} \ar@/_/[ddr]_-{\widetilde{\pi}_1} \ar[dr]^-\nu \ar@/^/[drrr]^-{\widetilde{\pi}_2} & & & \\
& C\times_D C \ar[rr]^-{\pi_2} \ar[d]^-{\pi_1} && C \ar[d]^-f \\
& C \ar[rr]^-f && D
}
\end{equation}
By flat base change \cite[p.~255, Proposition 9.3]{Ha},
\begin{equation}\label{f1}
f^* (f_* {\mathcal O}_C )\,\cong\, {\pi_1}_* (\pi^*_2 {\mathcal O}_C ) \,=\, {\pi_1}_* {\mathcal O}_{C\times_D C}\, .
\end{equation}

(1)~$\Longrightarrow$~(2): Since $f$ is separable, $f^*F$ is semistable if $F$ is so (see
Remark \ref{rem1}), and hence
the maximal semistable subbundle of $f^*f_*{\mathcal O}_C$ is $f^*\mathcal V$, where ${\mathcal V}\,
\subset\, f_*{\mathcal O}_C$ is the maximal semistable subbundle. If $f$ is genuinely ramified, then
the maximal semistable subbundle of $f^*f_*{\mathcal O}_C$ is $f^*{\mathcal O}_D\,=\, {\mathcal O}_C$.
On the other hand,
$$H^0(C,\, (f^*f_*{\mathcal O}_C)/(f^*\mathcal V))\,=\, 0\, ,$$
because $\mu_{\rm max}((f^*f_*{\mathcal O}_C)/(f^*\mathcal V))\, <\, 0$ (see \eqref{a1}).
These together imply that $$\dim H^0(C,\, f^*f_*{\mathcal O}_C)\,=\,1\, ;$$
to see this consider the long exact sequence of cohomologies associated to the short exact sequence
$$
0\, \longrightarrow\, f^*\mathcal V\, \longrightarrow\,f^*f_*{\mathcal O}_C \, \longrightarrow\, 
(f^*f_*{\mathcal O}_C)/(f^*\mathcal V)\, \longrightarrow\,0.
$$

(2)~$\Longleftrightarrow$~(3): From \eqref{f1} it follows that 
\begin{equation}\label{t1}
H^0(C,\, f^*f_*{\mathcal O}_C) \,=\, H^0(C,\, {\pi_1}_*{\mathcal O}_{C\times_D C})
\,=\, H^0(C\times_D C,\, {\mathcal O}_{C\times_D C})\, .
\end{equation}
Consequently, $C\times _D C$ is connected if and only if $\dim H^0(C,\, f^*f_*{\mathcal O}_C)\,=\,1$. 

(3)~$\Longrightarrow$~(1): Assume that $f$ is \textit{not} genuinely ramified. We will prove that $C\times_D C$ is 
not connected.

Let $g\,:\,\widetilde{D}\, \longrightarrow\, D$ be the \'etale cover of $D$ given by ${\rm Spec}\, {\mathcal W}$,
where ${\mathcal W}\, \subset\, f_*{\mathcal O}_C$ is the maximal semistable subbundle (as
in \eqref{g}). The degree of this
covering $g$ is at least two, because $f$ is not genuinely ramified.
To prove that $C\times_D C$ is not connected it suffices to show that ${\widetilde D}\times_D {\widetilde D}$ is not
connected.

The projection $$\gamma\,:\, {\widetilde D}\times_D {\widetilde D}\,\longrightarrow\, {\widetilde D}$$
to the first factor is
evidently the base change of $g\,:\,\widetilde{D}\, \longrightarrow\, D$ to $\widetilde{D}$, and hence
the map $\gamma$ is \'etale. The diagonal ${\widetilde D}\, \hookrightarrow\, {\widetilde D}\times_D {\widetilde D}$
is a connected component of ${\widetilde D}\times_D {\widetilde D}$.
This implies that ${\widetilde D}\times_D {\widetilde D}$ is not connected, because the degree of
$\gamma$ is at least two.
\end{proof} 

\begin{definition}\label{def2}
A nonconstant morphism $f\,:\,C\,\longrightarrow\,D$ between irreducible smooth projective curves
will be called a \textit{separable Galois morphism} if $f$ is separable, and there is a reduced
finite subgroup $\Gamma\, \subset\, \text{Aut}(C)$ such that $D\,=\, C/\Gamma$ and
$f$ is the quotient map $C\,\longrightarrow\,C/\Gamma$. Note that a separable Galois morphism
need not be \'etale. A separable Galois morphism which is genuinely ramified will be called
a \textit{genuinely ramified Galois morphism}.
\end{definition}

\begin{proposition}\label{sumofnegetivedegreelinebundles}
Let $f\,:\,C\,\longrightarrow\,D$ be a separable Galois morphism, of degree $d$,
between irreducible smooth projective curves.
Then $f^* ((f_* {\mathcal O}_C) / {\mathcal O}_D)$
is a coherent subsheaf of ${\mathcal O}^{\oplus (d-1)}_C$.
\end{proposition}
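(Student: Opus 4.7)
The plan is to construct a canonical $\mathcal{O}_C$-linear embedding $\Phi \colon f^*f_*\mathcal{O}_C \hookrightarrow \mathcal{O}_C^{\oplus d}$ that sends the canonical subsheaf $f^*\mathcal{O}_D = \mathcal{O}_C$ onto the diagonal copy of $\mathcal{O}_C$, and then to quotient both sides by $\mathcal{O}_C$. Let $\Gamma \subset \mathrm{Aut}(C)$ denote the Galois group of $f$, so $|\Gamma|=d$; each $\gamma \in \Gamma$ is an $\mathcal{O}_D$-linear automorphism of $\mathcal{O}_C$.

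The key construction is as follows. Since $f$ is finite flat, $f_*\mathcal{O}_C$ is a locally free $\mathcal{O}_D$-module of rank $d$, so $f^*f_*\mathcal{O}_C$ is a locally free $\mathcal{O}_C$-module of rank $d$; locally at $x \in C$ it is $A \otimes_B A$, where $B \hookrightarrow A$ is the induced local inclusion of rings. I would define $\Phi$ locally by
\[
\Phi \colon A \otimes_B A \longrightarrow \prod_{\gamma \in \Gamma} A,\qquad a \otimes a' \ \longmapsto\ (a \cdot \gamma(a'))_{\gamma \in \Gamma},
\]
and verify that this is a well-defined $\mathcal{O}_C$-algebra homomorphism (the fact that it descends from the tensor product over $B$ uses $\Gamma$-invariance of $B$; $\mathcal{O}_C$-linearity comes from the left factor). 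To show $\Phi$ is injective, I would observe that both source and target are locally free $\mathcal{O}_C$-modules of rank $d$, so on the smooth curve $C$ it suffices to check that $\Phi$ is an isomorphism at the generic point. At the generic point, $\Phi$ specializes to the classical Galois-theoretic isomorphism $L \otimes_K L \xrightarrow{\sim} \prod_{\gamma \in \Gamma} L$, where $L = k(C)$ and $K = k(D)$, which holds for any finite separable Galois extension (regardless of the characteristic of $k$).

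Finally, I would identify the image of $f^*\mathcal{O}_D$ under $\Phi$. Locally the inclusion $\mathcal{O}_D \hookrightarrow f_*\mathcal{O}_C$ corresponds to $B \hookrightarrow A$; pulling back by $f$ tensors with $A$ over $B$, giving $a \mapsto a \otimes 1$, and $\Phi(a \otimes 1) = (a)_{\gamma \in \Gamma}$, namely the diagonal. The diagonal is split off $\mathcal{O}_C^{\oplus d}$ (for instance by the first projection), so $\mathcal{O}_C^{\oplus d}/\Delta(\mathcal{O}_C) \cong \mathcal{O}_C^{\oplus (d-1)}$. Since $f$ is flat, pulling back the short exact sequence $0 \to \mathcal{O}_D \to f_*\mathcal{O}_C \to (f_*\mathcal{O}_C)/\mathcal{O}_D \to 0$ preserves exactness, and quotienting $\Phi$ by its restriction to $f^*\mathcal{O}_D$ delivers the desired embedding $f^*((f_*\mathcal{O}_C)/\mathcal{O}_D) \hookrightarrow \mathcal{O}_C^{\oplus(d-1)}$.

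I expect the only real obstacle to be setting up $\Phi$ cleanly and justifying its injectivity; the generic-point argument is the crucial point, while everything else is routine bookkeeping (well-definedness, flat base change, and the splitting of the diagonal).
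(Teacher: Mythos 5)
Your proof is correct and is essentially the paper's argument in algebraic dress: your map $a\otimes a'\mapsto (a\cdot\gamma(a'))_{\gamma\in\Gamma}$ is exactly the paper's composite $f^*f_*\mathcal{O}_C\cong\pi_{1*}\mathcal{O}_{C\times_DC}\hookrightarrow\widetilde{\pi}_{1*}\mathcal{O}_{\widetilde{C\times_DC}}=\mathcal{O}_C\otimes_k k[\Gamma]$, with your generic-point identification $L\otimes_KL\cong\prod_{\gamma\in\Gamma}L$ playing the role of the paper's identification of the normalization of $C\times_DC$ with $C\times\Gamma$, and your splitting of the diagonal matching the paper's choice of a trivial complement $\mathcal{E}$ to $\xi(\mathcal{O}_C)$.
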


\begin{proof}
The Galois group $\text{Gal}(f)$ of $f$ will be denoted by $\Gamma$.
For any point $x\, \in\, C$, let $$\Gamma_x\, \subset\, \Gamma$$ be the isotropy
subgroup that fixes $x$ for the action of
$\Gamma$ on $C$. A point $(x,\, y)\, \in\, C\times_D C$ is singular if and
only if
$\Gamma_x$ is nontrivial. Note that for any $(x,\, y)\, \in\, C\times_D C$
the two isotropy subgroups $\Gamma_x$ and $\Gamma_y$ are conjugate, because
$y$ lies in the orbit $\Gamma\cdot x$ of $x$.
For any $\sigma\, \in\, \Gamma$, let
\begin{equation}\label{cs}
C_\sigma\, \subset\, C\times_D C
\end{equation}
be the irreducible component given by the image of the map
$$\beta_\sigma\, :\, C\,\longrightarrow\, C\times C\, ,\ \ x\,\longmapsto\, (x,\,\sigma(x))\, ;$$
clearly we have $\beta_\sigma(C)\, \subset\, C\times_D C$.
In this way, the irreducible components of $C\times_D C$ are parametrized by the elements of
the Galois group $\Gamma$. Note that there is a canonical identification
\begin{equation}\label{re8}
C\, \stackrel{\sim}{\longrightarrow}\, C_\sigma
\end{equation}
for every $\sigma\, \in\, \Gamma$.

Let $\widetilde{C\times_D C}$ be the normalization of $C\times_D C$. The maps $\beta_\sigma$,
$\sigma\, \in\, \Gamma$, in \eqref{cs} together produce
an isomorphism
\begin{equation}\label{re7}
C\times\Gamma\,\stackrel{\sim}{\longrightarrow}\, \widetilde{C\times_D C}\, ;
\end{equation}
this map sends any $(y,\, \sigma)\, \in\, C\times\Gamma$ to $(y,\, \sigma(y))$ if $\Gamma_y$ is trivial;
if $\Gamma_y$ is trivial, then $(y,\, \sigma(y))$ is a smooth point of $C\times_D C$ and hence $(y,\, \sigma(y))$
gives a unique point of $\widetilde{C\times_D C}$. Consequently, we have
\begin{equation}\label{e5}
\widetilde{\pi}_{1*} {\mathcal O}_{\widetilde{C\times_D C}}\,=\,{\mathcal O}_C\otimes_k k[\Gamma]\, ,
\end{equation}
where $\widetilde{\pi}_1$ is the projection in \eqref{d1}, and
$k[\Gamma]$ is the group ring. The natural inclusion ${\mathcal O}_{C\times_D C}\,\hookrightarrow\,
\nu_*{\mathcal O}_{\widetilde{C\times_D C}}$, where $\nu$ is the map in \eqref{d1},
induces an injective homomorphism
\begin{equation}\label{e6}
\varphi\, :\, {\pi_1}_* {\mathcal O}_{C\times_D C}\, \hookrightarrow\,
{\pi_1}_* \nu_*{\mathcal O}_{\widetilde{C\times_D C}}\,=\,
\widetilde{\pi}_{1*}{\mathcal O}_{\widetilde{C\times_D C}}\, ,
\end{equation}
where $\pi_1$ and $\widetilde{\pi}_1$ are the maps in \eqref{d1}.

Let
\begin{equation}\label{z2}
\xi\, :\, {\mathcal O}_C \, \longrightarrow\, {\mathcal O}_C\otimes_k k[\Gamma]
\end{equation}
be the composition of homomorphisms
$$
{\mathcal O}_C \, \longrightarrow\, {\pi_1}_* {\mathcal O}_{C\times_D C} \,
\stackrel{\varphi}{\longrightarrow}\, \widetilde{\pi}_{1*}{\mathcal O}_{\widetilde{C\times_D C}}\,=\,
{\mathcal O}_C\otimes_k k[\Gamma]
$$
(see \eqref{e6} and \eqref{e5}). Note that the image $\xi({\mathcal O}_C)$ in \eqref{z2} is a subbundle
of ${\mathcal O}_C\otimes_k k[\Gamma]$, because the section $$\xi(1_C)\, \subset\,
H^0(C,\, \widetilde{\pi}_{1*}{\mathcal O}_{\widetilde{C\times_D C}})\,=\,
k[\Gamma]$$
is nowhere vanishing, where $1_C$ is the constant function $1$ on $C$. There is a trivial
subbundle ${\mathcal E}$ of the trivial bundle ${\mathcal O}_C\otimes_k k[\Gamma]$
$$
{\mathcal O}^{\oplus (d-1)}_C\,=\, {\mathcal E}\, \subset\, {\mathcal O}_C\otimes_k k[\Gamma]
$$
such that
\begin{equation}\label{e7}
{\mathcal E}\oplus \xi({\mathcal O}_C)\,=\, {\mathcal O}_C\otimes_k k[\Gamma]\, .
\end{equation}
To see this, take any point $x\, \in\, C$, and choose a subspace
$$
{\mathcal E}_x\, \subset\, ({\mathcal O}_C\otimes_k k[\Gamma])_x\,=\,
k[\Gamma]
$$
such that $k[\Gamma]\,=\, {\mathcal E}_x\oplus \xi({\mathcal O}_C)_x$; then take
$$
{\mathcal E}\,:=\, {\mathcal O}_C\otimes_k {\mathcal E}_x\, \subset\, 
{\mathcal O}_C\otimes_k k[\Gamma]\, .
$$
This subbundle $\mathcal E$ clearly satisfies the condition in \eqref{e7}.

{}From the decomposition in \eqref{e7} we conclude that $({\mathcal O}_C\otimes_k k[\Gamma])/\xi({\mathcal O}_C)\,=
\, \mathcal E$. Using the isomorphism in \eqref{e5}, the homomorphism $\varphi$ in \eqref{e6} gives a homomorphism
\begin{equation}\label{e8}
\varphi'\, :\, ({\pi_1}_* {\mathcal O}_{C\times_D C})/{\mathcal O}_C \, \longrightarrow\, 
({\mathcal O}_C\otimes_k k[\Gamma])/(\varphi({\mathcal O}_C))\,=\,
({\mathcal O}_C\otimes_k k[\Gamma])/(\xi({\mathcal O}_C))\,=\, {\mathcal E}.
\end{equation}
On the other hand, the isomorphism in \eqref{f1} produces an isomorphism
$$
({\pi_1}_* {\mathcal O}_{C\times_D C})/{\mathcal O}_C \,\cong\,f^* ((f_* {\mathcal O}_C) / {\mathcal O}_D)\, .
$$
Combining this isomorphism with the homomorphism $\varphi'$ in \eqref{e8} we get a homomorphism
$$
f^* ((f_* {\mathcal O}_C) / {\mathcal O}_D)\, \longrightarrow\,\mathcal E\,=\, {\mathcal O}^{\oplus (d-1)}_C\, .
$$
This homomorphism is clearly an isomorphism over the nonempty open subset of $C$ where $f$ is \'etale.
\end{proof}

Note that $f^* ((f_* {\mathcal O}_C) / {\mathcal O}_D)\,=\, (f^*f_* {\mathcal O}_C)/{\mathcal O}_C$; but we use
$f^* ((f_* {\mathcal O}_C) / {\mathcal O}_D)$ due to the relevance of $(f_* {\mathcal O}_C)/{\mathcal O}_D$.

Let
$$f\,:\,C\,\longrightarrow\,D$$ be a genuinely ramified Galois morphism, of degree $d$,
between irreducible smooth projective curves; see Definition \ref{def2}. As before, the Galois group $\text{Gal}(f)$
will be denoted by $\Gamma$, so we have $$\# \Gamma\,=\, d\, .$$
Assume that $d\, > \, 1$.

As in \eqref{cs}, the irreducible component of $C\times_D C$ corresponding
to $\sigma\, \in\, \Gamma$ will be denoted by $C_\sigma$.

The following lemma formulated in the above set-up 
will be used in proving a variation of Proposition \ref{sumofnegetivedegreelinebundles}.

\begin{lemma}\label{lemord}
There is an ordering of the elements of $\Gamma$
$$
\Gamma\, =\, \{\gamma_1,\, \gamma_2,\, \cdots,\, \gamma_d\}
$$
and a self-map
$$
\eta\, :\, \{1,\, 2,\, \cdots ,\, d\}\, \longrightarrow\, \{1,\, 2,\, \cdots ,\, d\}
$$
such that
\begin{enumerate}
\item $\gamma_1\,=\, e$ (the identity element of $\Gamma$),

\item $\eta(1)\, =\, 1$,

\item $\eta(j) \, <\, j$ for all $j\, \in\, \{2,\, \cdots ,\, d\}$, and

\item $C_{\gamma_j}\bigcap C_{\gamma_{\eta(j)}}\, \not=\, \emptyset$ (see \eqref{cs} for notation).
\end{enumerate}
\end{lemma}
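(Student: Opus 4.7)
The plan is to view the irreducible components $\{C_\sigma\}_{\sigma\in\Gamma}$ of the fiber product $C\times_D C$ as vertices of an undirected graph $\mathcal{G}$, with an edge joining $C_\sigma$ and $C_\tau$ whenever $C_\sigma\cap C_\tau\neq\emptyset$. A required ordering with the self-map $\eta$ is precisely the data of a rooted spanning tree of $\mathcal{G}$ rooted at $C_e$, traversed so that every non-root vertex gets an index strictly larger than its parent (for instance a breadth-first or depth-first listing starting from $C_e$, with $\eta(j)$ being the parent of $\gamma_j$ in the tree). Thus the lemma will follow once $\mathcal{G}$ is shown to be connected.

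The connectedness of $\mathcal{G}$ is obtained from the connectedness of $C\times_D C$ as follows. Since $f$ is genuinely ramified, Lemma~\ref{genuinerampi1surjcurves} gives that $C\times_D C$ is connected. The scheme $C\times_D C$ is the finite union $\bigcup_{\sigma\in\Gamma} C_\sigma$ of its irreducible components, and each $C_\sigma$ is connected, being the image of $C$ under $\beta_\sigma$ (equivalently, via the identification in \eqref{re8}, isomorphic to the connected curve $C$). A general topological fact then applies: a finite union of connected closed subsets is connected if and only if the corresponding intersection graph is connected. Applied here, this gives that $\mathcal{G}$ is connected.

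With $\mathcal{G}$ connected, fix any spanning tree $T\subset\mathcal{G}$ and root it at the vertex $C_e$. Order the vertices $C_{\gamma_1},\ldots,C_{\gamma_d}$ by non-decreasing distance in $T$ from the root, putting $\gamma_1=e$ first and breaking ties arbitrarily. For each $j\geq 2$, the unique neighbor of $\gamma_j$ along the unique $T$-path from $\gamma_j$ to $e$ has strictly smaller distance to the root, hence strictly smaller index; define $\eta(j)$ to be that index, and set $\eta(1)=1$. By construction $\eta(j)<j$ for all $j\geq 2$, and $C_{\gamma_j}\cap C_{\gamma_{\eta(j)}}\neq\emptyset$ since $\{\gamma_j,\gamma_{\eta(j)}\}$ is an edge of $T\subset\mathcal{G}$.

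The only nontrivial step is the passage from connectedness of $C\times_D C$ to connectedness of the intersection graph $\mathcal{G}$; I expect this to be the main (albeit mild) obstacle. It is handled by the elementary topological statement recalled above, applied to the closed covering by irreducible components, each of which is connected by virtue of being the image of the irreducible curve $C$ under $\beta_\sigma$.
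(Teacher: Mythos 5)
Your proposal is correct and is essentially the paper's argument: the paper's inductive construction of the sets $\Gamma_0,\Gamma_1,\Gamma_2,\dots$ is exactly a breadth-first layering of your intersection graph $\mathcal{G}$, rooted at $C_e$, with $\eta(j)$ the parent index, and it likewise derives connectedness of that graph from the connectedness of $C\times_D C$ supplied by Lemma \ref{genuinerampi1surjcurves}. Your packaging via the standard fact that a finite union of connected closed sets is connected iff its intersection graph is connected is a clean, equivalent way of phrasing the same step.
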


\begin{proof}
Set $\Gamma_0\, :=\, \gamma_1$ to be the identity element $e\, \in\, \Gamma$; also, set
$\eta(1)\,=\, 1$. Set $N_0\, =\,1$.

Let $$\Gamma_1\, \subset\, \Gamma$$ be the subset consisting of all $\gamma\, \not=\, e$ such that
the action of $\gamma$ on $C$ has a fixed point. Therefore, $\Gamma_1$ consists of all $\gamma\, \not=\, e$ such that
the irreducible component $C_\gamma\,\subset\, C\times_D C$ intersects the component $C_e\,=\, C_{\gamma_1}$.
We note that $\Gamma_1$ is nonempty, because otherwise
$C_{\gamma_1}$ would be a connected component of $C\times_D C$, while from Lemma
\ref{genuinerampi1surjcurves}(3) we know that $C\times_D C$ is connected;
recall that $\Gamma\, \not=\,\{e\}$ and $f$ is genuinely ramified.

If $\# \Gamma_1\,=\, N_1-1\, =\, N_1-N_0$, set $\gamma_j\, \in\, \Gamma$, $2\, \leq\, j\, \leq\, N_1$, to be
distinct elements of $\Gamma_1$ in an arbitrary order. Set
$$
\eta(j)\, =\, 1
$$
for all $2\, \leq\, j\, \leq\, N_1$.

If $\Gamma_1\bigcup\Gamma_0\, \not=\, \Gamma$,
let $$\Gamma_2\, \subset\, \Gamma\setminus (\Gamma_1\cup \Gamma_0)$$
be the subset consisting of all $\gamma\, \in\, \Gamma\setminus (\Gamma_1\bigcup \Gamma_0)$ such that the irreducible
component
$$C_\gamma\,\subset\, C\times_D C$$ intersects the component $C_\sigma$ for some $\sigma\, \in\, \Gamma_1$.
Note that such a component $C_\gamma$ does not intersect $C_{\gamma_1}$,
because in that case we would have $\gamma\, \in\, \Gamma_1$.

If $\# \Gamma_2\,=\, N_2-N_1$, set $\gamma_j\, \in\, \Gamma$, $N_1+1\, \leq\, j\, \leq\, N_2$,
to be distinct elements of $\Gamma_2$ in an arbitrary order. For every $N_1+1\, \leq\, j\, \leq\, N_2$, set 
$$
\eta(j)\, \in\, \, \{2,\, \cdots ,\, N_1\}
$$
such that the component $C_{\gamma_j}\,\subset\, C\times_D C$
intersects the component $C_{\gamma_{\eta(j)}}$; the above definition of $\Gamma_2$
ensures that such a $\eta(j)$ exists. If there are
more than one $m\, \in\, \{2,\, \cdots ,\, N_1\}$ such that
$C_{\gamma_j}\,\subset\, C\times_D C$ intersects the component $C_{\gamma_m}$, then choose
$\eta(j)$ arbitrarily from them.

Now inductively define
$$
\Gamma_n\, \subset\, \Gamma\setminus (\bigcup_{i=0}^{n-1} \Gamma_i)\, ,
$$
if $\Gamma_n\, \not=\, \emptyset$,
to be the subset consisting of all $\gamma\, \in\, \Gamma\setminus (\bigcup_{i=0}^{n-1} \Gamma_i)$ such that the
irreducible component $C_\gamma\,\subset\, C\times_D C$ intersects the component $C_\sigma$ for some
$\sigma\, \in\, \Gamma_{n-1}$. Note that such a component $C_\gamma$ does not intersect
$\bigcup_{i=0}^{n-2} \Gamma_i$, because in that case $\gamma\, \in\, \bigcup_{i=0}^{n-1} \Gamma_i$.

If $\# \Gamma_n\,=\, N_n- \sum_{i=0}^{n-1}\# \Gamma_i\,=\, N_n-N_{n-1}$, set $\gamma_j\, \in\,
\Gamma$, $N_{n-1}+1\, \leq\, j\, \leq\, N_n$,
to be distinct elements of $\Gamma_n$ in an arbitrary order. For $N_{n-1}+1\, \leq\, j\, \leq\,
N_n$, set
$$
\eta(j)\, \in\, \, [N_{n-2}+1,\, N_{n-1}]\,=\,
\{1+\sum_{i=0}^{n-2}\# \Gamma_i,\, \cdots ,\, \sum_{i=0}^{n-1}\# \Gamma_i\}
$$
such that the component $C_{\gamma_j}\,\subset\, C\times_D C$
intersects the component $C_{\gamma_{\eta(j)}}$. If $C_{\gamma_j}$ intersects more than
one such component, choose $\eta(j)$ to be any one from them, as before.

Since $\Gamma$ is a finite group, we have $\Gamma_n\, =\,\emptyset$ for all $n$ sufficiently large. Set
$$
{\mathbb S}\,=\, \sum_{i=0}^\infty \# \Gamma_i\, =\, \text{Max}_{i\geq 0}\{N_i\}\, .
$$
Note that
$$
\bigcup_{i=1}^{\mathbb S} C_{\gamma_i}\, \subset\, C\times_D C
$$
is the connected component of $C\times_D C$ containing $C_{\gamma_1}$. Hence from 
Lemma \ref{genuinerampi1surjcurves}(3) we know that $$\bigcup_{i=1}^{\mathbb S} C_{\gamma_i}\,=\, C\times_D C .$$
In other words, we have ${\mathbb S}\ =\, d\, :=\, \# \Gamma$. This completes the proof of the lemma.
\end{proof}

\begin{proposition}\label{sumofnegetivedegreelinebundles1}
Let $f\,:\,C\,\longrightarrow\,D$ be a genuinely ramified Galois morphism, of degree $d$,
between irreducible smooth projective curves. Then
$$f^* ((f_* {\mathcal O}_C) / {\mathcal O}_D)\, \subset\, \bigoplus_{i=1}^{d-1} {\mathcal L}_i\, ,$$
where each ${\mathcal L}_i$ is a line bundle on $D$ of negative degree. 
\end{proposition}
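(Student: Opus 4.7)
The plan is to refine the inclusion from Proposition~\ref{sumofnegetivedegreelinebundles} by exploiting the tree structure from Lemma~\ref{lemord}. Keep the notation of Proposition~\ref{sumofnegetivedegreelinebundles}: order $\Gamma = \{\gamma_1, \ldots, \gamma_d\}$ with $\gamma_1 = e$ as in Lemma~\ref{lemord}, identify $\widetilde{\pi}_{1*}\mathcal{O}_{\widetilde{C\times_D C}}$ with $\bigoplus_{\sigma \in \Gamma} \mathcal{O}_C^{(\sigma)}$ via \eqref{e5} (each $\mathcal{O}_C^{(\sigma)}$ corresponding to the component $C_\sigma$), and recall that under this identification $\xi(\mathcal{O}_C)$ is the diagonal copy generated by $\sum_\sigma e_\sigma$, while $\pi_{1*}\mathcal{O}_{C\times_D C}$ is the subsheaf of tuples $(s_\sigma)$ whose coordinates match at singular points of $C\times_D C$ where two branches cross.

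For each $j \in \{2, \ldots, d\}$, the idea is to use the edge $(\gamma_j, \gamma_{\eta(j)})$ of the spanning structure to produce one of the $d-1$ negative-degree line bundles. Define
$$
q_j \,:\, \widetilde{\pi}_{1*}\mathcal{O}_{\widetilde{C\times_D C}}/\xi(\mathcal{O}_C) \,\longrightarrow\, \mathcal{O}_C, \qquad [(s_\sigma)] \,\longmapsto\, s_{\gamma_j} - s_{\gamma_{\eta(j)}},
$$
which is well-defined since the difference kills the diagonal. By condition~(4) of Lemma~\ref{lemord}, the components $C_{\gamma_j}$ and $C_{\gamma_{\eta(j)}}$ meet; via the identification \eqref{re8} the intersection corresponds to the (nonempty, finite) fixed-point set $D_j \subset C$ of $\gamma_{\eta(j)}^{-1}\gamma_j \neq e$. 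At each of these points $(x, \gamma_j(x)) = (x, \gamma_{\eta(j)}(x))$, the local compatibility condition defining $\mathcal{O}_{C\times_D C} \hookrightarrow \nu_*\mathcal{O}_{\widetilde{C\times_D C}}$ forces $s_{\gamma_j}(x) = s_{\gamma_{\eta(j)}}(x)$; hence $q_j$ restricted to $\pi_{1*}\mathcal{O}_{C\times_D C}/\xi(\mathcal{O}_C)$ factors through $\mathcal{L}_{j-1} := \mathcal{O}_C(-D_j)$, which is a line bundle of strictly negative degree.

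Assembling these maps yields
$$
q \,:=\, (q_2, \ldots, q_d)\,:\, \pi_{1*}\mathcal{O}_{C\times_D C}/\xi(\mathcal{O}_C) \,\longrightarrow\, \bigoplus_{i=1}^{d-1} \mathcal{L}_i,
$$
and the left-hand side is identified with $f^*((f_*\mathcal{O}_C)/\mathcal{O}_D)$ via the isomorphism in \eqref{f1}. Injectivity of $q$ is an induction on $j$ along the tree from Lemma~\ref{lemord}: if every $q_j$ kills a class represented by $(s_\sigma)$, then $s_{\gamma_j} = s_{\gamma_{\eta(j)}}$ for all $j \geq 2$, and since $\eta(j) < j$ while $\eta(1) = 1$, one propagates back to conclude $s_{\gamma_j} = s_{\gamma_1}$ for all $j$, i.e.\ the tuple is in $\xi(\mathcal{O}_C)$.

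The main technical obstacle is the local claim that, for a section $(s_\sigma)$ of $\pi_{1*}\mathcal{O}_{C\times_D C}$, the difference $s_{\gamma_j} - s_{\gamma_{\eta(j)}}$ actually vanishes along $D_j$. This requires unpacking the description of $\mathcal{O}_{C\times_D C}$ as the subalgebra of $\nu_*\mathcal{O}_{\widetilde{C\times_D C}}$ cut out by the gluing data at the singular points, and checking this at each point of $C_{\gamma_j}\cap C_{\gamma_{\eta(j)}}$. Once this is in place — and combined with the genuinely-ramified hypothesis via Lemma~\ref{lemord}, which is what guarantees each $D_j$ is nonempty — the remaining steps (injectivity and reassembly) are purely formal.
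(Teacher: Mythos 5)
Your proposal is correct and is essentially the paper's own argument: the maps $q_j([(s_\sigma)]) = s_{\gamma_j}-s_{\gamma_{\eta(j)}}$ are exactly the components of the homomorphism $\Phi$ in \eqref{re4}, the vanishing along $C_{\gamma_j}\cap C_{\gamma_{\eta(j)}}$ is the computation \eqref{re6}, and the inductive injectivity along the tree from Lemma \ref{lemord} is the same. The only (harmless) difference is that you twist down by the full reduced intersection divisor $D_j$ where the paper picks a single point $z_i$, giving ${\mathcal L}_i={\mathcal O}_C(-z_i)$.
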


\begin{proof}
As in Lemma \ref{lemord}, the Galois group $\text{Gal}(f)$ is denoted by $\Gamma$.
The ordering in Lemma
\ref{lemord} of the elements of $\Gamma$ produces an isomorphism of $k[\Gamma]$ with $k^{\oplus d}$.
Consequently, from \eqref{e5} we have
\begin{equation}\label{z1}
\widetilde{\pi}_{1*} {\mathcal O}_{\widetilde{C\times_D C}}\,=\,
{\mathcal O}_C\otimes_k k[\Gamma]\,=\, {\mathcal O}^{\oplus d}_C\, .
\end{equation}

Let
$$
\Phi\, :\, \widetilde{\pi}_{1*} {\mathcal O}_{\widetilde{C\times_D C}}\,=\, {\mathcal O}^{\oplus d}_C
\, \longrightarrow\, {\mathcal O}^{\oplus d}_C \,=\,
\widetilde{\pi}_{1*} {\mathcal O}_{\widetilde{C\times_D C}}
$$
be the homomorphism defined by
\begin{equation}\label{re4}
(f_1,\, f_2,\, \cdots, \, f_d)\, \longmapsto\,
(f_1-f_{\eta(1)},\, f_2-f_{\eta(2)},\, \cdots, \, f_d-f_{\eta(d)})\, ,
\end{equation}
where $\eta$ is the map in Lemma \ref{lemord};
more precisely, the $i$-th
component of $\Phi(f_1,\, f_2,\, \cdots, \, f_d)$ is $f_i-f_{\eta(i)}$. It is straightforward to
check that
$$
{\mathcal F}\, :=\, \Phi({\mathcal O}^{\oplus d}_C)\, \subset\,
{\mathcal O}^{\oplus d}_C \,=\,
\widetilde{\pi}_{1*} {\mathcal O}_{\widetilde{C\times_D C}}
$$
is a trivial subbundle of rank $d-1$; the first component of $\Phi(f_1,\, f_2,\, \cdots, \, f_d)$
vanishes identically, because $\eta(1)\,=\,1$. More precisely, we have
\begin{equation}\label{zf2}
{\mathcal F}\,=\, {\mathcal O}^{\oplus (d-1)}_C\, \subset\, {\mathcal O}^{\oplus d}_C
\,=\, \widetilde{\pi}_{1*} {\mathcal O}_{\widetilde{C\times_D C}}\, ,
\end{equation}
where ${\mathcal O}^{\oplus (d-1)}_C$ is the subbundle of ${\mathcal O}^{\oplus d}_C$
spanned by all $(f_1,\, f_2,\, \cdots, \, f_d)$ such that $f_1\,=\,0$.

{}From \eqref{zf2} it follows immediately that
\begin{equation}\label{cf2}
\widetilde{\pi}_{1*} {\mathcal O}_{\widetilde{C\times_D C}}\,=\,
{\mathcal O}^{\oplus d}_C\,=\, {\mathcal F}\oplus \xi({\mathcal O}_C)\, ,
\end{equation}
where $\xi({\mathcal O}_C)$ is the subbundle of ${\mathcal O}^{\oplus d}_C\,=\, 
{\mathcal O}_C\otimes_k k[\Gamma]$ in \eqref{z2} (see \eqref{z1}).

In \eqref{d1} we have $\widetilde{\pi}_1\,=\,\pi_1\circ\nu$, and hence,
as in \eqref{e6},
 there is a natural homomorphism
\begin{equation}\label{re5}
\varphi\, :\,
{\pi_1}_* {\mathcal O}_{C\times_D C}\, \hookrightarrow\, \widetilde{\pi}_{1*} {\mathcal O}_{\widetilde{C\times_D C}}
\end{equation}
which is an isomorphism over the open subset of $C$ where $f$ is \'etale. Therefore, from
\eqref{f1} and \eqref{cf2} we get an injective homomorphism
of coherent sheaves
\begin{equation}\label{cf3}
\Psi\, :\, f^* ((f_* {\mathcal O}_C) / {\mathcal O}_D)\,\longrightarrow \, {\mathcal F}\,=\,
{\mathcal O}^{\oplus (d-1)}_C\, ;
\end{equation}
it is similar to \eqref{e8}, except that now the direct summand ${\mathcal F}$ is chosen carefully
(it was $\mathcal E$ in \eqref{e8}).
Note that since $\text{rk}(f^* ((f_* {\mathcal O}_C) / {\mathcal O}_D))\,=\, d-1\,=\,
\text{rk}({\mathcal O}^{\oplus (d-1)}_C)$, the homomorphism $\Psi$ in \eqref{cf3} is generically
an isomorphism, because it is an
injective homomorphism of coherent sheaves. More precisely, $\Psi$ is an isomorphism over the open subset
of $C$ where the map $f$ is \'etale.

Consider the map $\eta$ in Lemma \ref{lemord}. For every $1\,\leq\, i\, \leq\, d-1$, choose a point
\begin{equation}\label{zi}
z_i\, \in\, C_{\gamma_{i+1}}\bigcap C_{\gamma_{\eta(i+1)}}\, ;
\end{equation}
this is possible because
the fourth property in Lemma \ref{lemord} says that the intersection $C_{\gamma_{i+1}}\bigcap
C_{\gamma_{\eta(i+1)}}$ is nonempty. Recall from \eqref{re8} that
$C$ is identified with $C_{\gamma_{i+1}}$. The point $z_i\, \in\, C_{\gamma_{i+1}}$ in \eqref{zi}
will be considered as a point of $C$ using this identification. Let
$$
{\mathcal L}_i\,:=\, {\mathcal O}_C(-z_i)
$$
be the line bundle corresponding to the point $z_i\, \in\, C$.

For every $1\,\leq\, i\, \leq\, d-1$, let
\begin{equation}\label{pj}
P_i\, :\, {\mathcal O}^{\oplus (d-1)}_C\, \longrightarrow\, {\mathcal O}_C
\end{equation}
be the natural projection to the $i$-th factor.

Consider the composition of homomorphisms $P_i\circ\Psi$, where $P_i$ and
$\Psi$ are constructed in \eqref{pj} and \eqref{cf3} respectively. It can be shown that
$P_i\circ\Psi$ vanishes when restricted to the
point $z_i$ in \eqref{zi}. To see this, for any $1\,\leq\, j\, \leq\, d$, let
$$
\widehat{P}_j\, :\, {\mathcal O}^{\oplus d}_C\, \longrightarrow\, {\mathcal O}_C
$$
be the natural projection to the $j$-th factor. Recall the homomorphism $\Phi$ constructed
in \eqref{re4}. If $(f_1,\, f_2,\, \cdots, \, f_d)$ in \eqref{re4} actually lies in the image
of ${\pi_1}_* {\mathcal O}_{C\times_D C}$ by the inclusion map $\varphi$ in \eqref{re5}, then from
\eqref{zi} we have
\begin{equation}\label{re6}
(\widehat{P}_{i+1}\circ \Phi)(f_1,\, f_2,\, \cdots, \, f_d)(z_i,\, \gamma_{i+1}) \,=\,
f_{i+1}(z_i,\, \gamma)-f_{\eta(i+1)}(z_i,\, \gamma_{\eta(i+1)})\,=\, 0\, ,
\end{equation}
where $(z_i,\, \gamma_{i+1})\, \in\, C\times \Gamma \,=\, \widetilde{C\times_D C}$
(see \eqref{re7}) and the same for $(z_i,\, \gamma_{\eta(i+1)})$; note that from \eqref{zi} it follows that
the point in $C$ corresponding to $z_i\, \in\, C_{\gamma_{\eta(i+1)}}$
(see \eqref{zi}) by the identification $C\,\stackrel{\sim}{\longrightarrow}\, C_{\gamma_{\eta(i+1)}}$
in \eqref{re8} coincides with the point corresponding to $z_i\, \in\, C_{\gamma_{i+1}}$ (the element
$\gamma^{-1}_{i+1}\gamma_{\eta(i+1)}\, \in\, \Gamma$ fixes this point of $C$).
To clarify, there is a slight abuse of notation in \eqref{e6} in the following sense:
sections of $\widetilde{\pi}_{1*} {\mathcal O}_{\widetilde{C\times_D C}}$ over an open subset
$U\, \subset\, C$ are identified with function on $\widetilde{\pi}^{-1}(U)$. So
$(f_1,\, f_2,\, \cdots, \, f_d)$ in \eqref{re6} is considered as a function on $\widetilde{\pi}^{-1}(U)$; the above
condition that $(f_1,\, f_2,\, \cdots, \, f_d)$ in \eqref{re6} lies in the image
of ${\pi_1}_* {\mathcal O}_{C\times_D C}$ by the inclusion $\varphi$ map in \eqref{re5} means that
$(f_1,\, f_2,\, \cdots, \, f_d)$ coincides with $\widehat{f}\circ \nu$ for some function $\widehat{f}$ on
$\pi^{-1}_1(U)$, where $\nu$ is the map in \eqref{d1}. Now from \eqref{re6} it follows that
$P_i\circ\Psi$ vanishes when restricted to the point $z_i\, \in\, C$.

Since $P_i\circ\Psi$ vanishes when restricted to the point $z_i$, we have
\begin{equation}\label{cf4}
P_i\circ\Psi(f^* ((f_* {\mathcal O}_C) / {\mathcal O}_D))\, \subset\, {\mathcal L}_i\,=\, {\mathcal O}_C(-z_i)
\, \subset\, {\mathcal O}_C\, .
\end{equation}
{}From \eqref{cf3} and \eqref{cf4} it follows immediately that
$$
f^* ((f_* {\mathcal O}_C)/{\mathcal O}_D)\, \hookrightarrow\,\bigoplus_{i=1}^{d-1} {\mathcal L}_i\, .
$$
Since $\text{deg}({\mathcal L}_i)\,=\, -1$, the proof of the proposition is complete.
\end{proof}

\section{Pullback of stable bundles and genuinely ramified maps}

\begin{lemma}\label{negetiveslope}
Let $f\,:\,C\,\longrightarrow\, D$ be a genuinely ramified morphism
between irreducible smooth projective curves. 
Let $V$ be a semistable vector bundle on $D$. Then 
$$
\mu_{\rm max} ( V\otimes ((f_*{\mathcal O}_C)/{\mathcal O}_D)) \, <\, \mu (V)\, .
$$
\end{lemma}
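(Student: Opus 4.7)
The plan is to reduce to the Galois case and invoke Proposition~\ref{sumofnegetivedegreelinebundles1}. First, choose a nonconstant separable morphism $h\colon Z\to C$ such that $\phi := f\circ h\colon Z\to D$ is Galois (by taking a Galois closure of the corresponding extension of function fields). The map $\phi$ need not itself be genuinely ramified, but by Lemma~\ref{maximaletalsubcover} and Corollary~\ref{cor1} it factors as $\phi = g\circ\phi'$, where $g\colon \widetilde D\to D$ is the \'etale cover corresponding to the maximal semistable subbundle $\mathcal W\subset \phi_*\mathcal O_Z$, and $\phi'\colon Z\to\widetilde D$ is genuinely ramified. Because $\mathcal W$ is canonically defined it is preserved by the Galois group of $\phi$, which forces $\widetilde D\to D$ to be Galois and $\phi'$ Galois as well, so $\phi'$ is a genuinely ramified Galois morphism.

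The Galois case of the lemma then follows cleanly from Proposition~\ref{sumofnegetivedegreelinebundles1} applied to $\phi'$: for any semistable $V'$ on $\widetilde D$, pulling $V'\otimes((\phi'_*\mathcal O_Z)/\mathcal O_{\widetilde D})$ back by $\phi'$ embeds it in $\bigoplus_i \phi'^*V'\otimes\mathcal L_i$, a direct sum of semistable bundles each of slope $\deg(\phi')\mu(V')+\deg(\mathcal L_i)<\deg(\phi')\mu(V')$. Since $\phi'$ is separable, the Harder--Narasimhan filtration pulls back, and dividing by $\deg(\phi')$ gives $\mu_{\max}(V'\otimes((\phi'_*\mathcal O_Z)/\mathcal O_{\widetilde D}))<\mu(V')$.

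Specializing to $V' = g^*V$ (semistable, with $\mu(g^*V)=\deg(g)\mu(V)$) and pushing forward by the \'etale map $g$, the projection formula together with the exactness of the finite pushforward $g_*$ yields $g_*\bigl(g^*V\otimes((\phi'_*\mathcal O_Z)/\mathcal O_{\widetilde D})\bigr)=V\otimes((\phi_*\mathcal O_Z)/\mathcal W)$, and Lemma~\ref{le1} then gives $\mu_{\max}(V\otimes((\phi_*\mathcal O_Z)/\mathcal W)) < \mu(V)$. To carry this over to $f$, compose the inclusion $V\otimes((f_*\mathcal O_C)/\mathcal O_D)\hookrightarrow V\otimes((\phi_*\mathcal O_Z)/\mathcal O_D)$ with the surjection $V\otimes((\phi_*\mathcal O_Z)/\mathcal O_D)\twoheadrightarrow V\otimes((\phi_*\mathcal O_Z)/\mathcal W)$; the kernel of the composition is $V\otimes(\mathcal I/\mathcal O_D)$ with $\mathcal I := f_*\mathcal O_C\cap\mathcal W$.

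The step I expect to be the main obstacle is showing $\mathcal I=\mathcal O_D$. The intersection $\mathcal I$ is a subbundle of $\phi_*\mathcal O_Z$ (stalkwise over a DVR, the intersection of two saturated submodules of a free module is saturated) and an $\mathcal O_D$-subalgebra (intersection of two $\mathcal O_D$-subalgebras). Since $\mathcal I\subset\mathcal W = g_*\mathcal O_{\widetilde D}$ as subalgebras, it equals $\bar g_*\mathcal O_{\bar D}$ for an intermediate cover $\bar D\to D$ which, being a factor of the \'etale cover $g$, is itself \'etale. The inclusion $\mathcal I\subset f_*\mathcal O_C$ as subalgebras then shows that $f$ factors as $C\to\bar D\to D$, and genuine ramification of $f$ forces $\bar D=D$, so $\mathcal I=\mathcal O_D$. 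Combined with the earlier bound this yields $\mu_{\max}(V\otimes((f_*\mathcal O_C)/\mathcal O_D)) < \mu(V)$.
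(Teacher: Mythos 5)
Your proof is correct, and while your treatment of the Galois case coincides with the paper's (apply Proposition \ref{sumofnegetivedegreelinebundles1}, tensor with the pulled-back semistable bundle, and divide by the degree using that Harder--Narasimhan filtrations pull back under separable maps), your reduction of the general case to the Galois one is genuinely different. The paper pulls everything up to the Galois closure $\widehat{C}$: it shows that $\widehat{C}\times_D\widehat{C}$ is a disjoint union of copies of $\widehat{C}\times_{\widehat{D}}\widehat{C}$, so that $F^*F_*\mathcal{O}_{\widehat{C}}$ is a direct sum of copies of $\widehat{g}^*\widehat{g}_*\mathcal{O}_{\widehat{C}}$, identifies the maximal semistable subbundles of $F^*F_*\mathcal{O}_{\widehat{C}}$ and $F^*f_*\mathcal{O}_C$ as the trivial subbundles generated by global sections, and thereby embeds $F^*((f_*\mathcal{O}_C)/\mathcal{O}_D)$ into a direct sum of negative-degree line bundles; genuine ramification of $f$ enters through the computation of $H^0(\widehat{C},\,F^*f_*\mathcal{O}_C)$. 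You instead stay on the base: you push the Galois-case estimate forward along the \'etale Galois piece $g$ using the projection formula, exactness of $g_*$, and Lemma \ref{le1}, and you transfer the bound from $(\phi_*\mathcal{O}_Z)/\mathcal{W}$ to $(f_*\mathcal{O}_C)/\mathcal{O}_D$ via the algebra intersection $\mathcal{I}=f_*\mathcal{O}_C\cap\mathcal{W}$, with genuine ramification entering through the factorization criterion of Proposition \ref{genuinerampi1surj}. Both routes rest on the same key input, but yours trades the fiber-product decomposition and $H^0$ identifications for a push-forward and a function-field argument. The one step to tighten is the identification of $\mathcal{I}$ with $\bar{g}_*\mathcal{O}_{\bar{D}}$: this is cleanest at the generic point, where $\mathcal{I}_\eta=k(C)\cap k(\widetilde{D})=:K$ is an intermediate field, $\bar{D}$ is the normalization of $D$ in $K$ (automatically \'etale over $D$ since it is dominated by the \'etale cover $\widetilde{D}$), and $K\subset k(C)$ gives the factorization $C\to\bar{D}\to D$; genuine ramification forces $K=k(D)$, and then saturation of $\mathcal{I}$ gives $\mathcal{I}=\mathcal{O}_D$. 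This is a presentational gloss, not a mathematical gap.
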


\begin{proof}
First assume that the map $f$ is Galois. Take the line bundles ${\mathcal L}_i$,
$1\, \leq\, i\, \leq\, d-1$, in
Proposition \ref{sumofnegetivedegreelinebundles1}, where $d\,=\, \text{deg}(f)$. Then from
Proposition \ref{sumofnegetivedegreelinebundles1} we have
$$
\mu_{\rm max} ( V\otimes ((f_*{\mathcal O}_C)/{\mathcal O}_D))\, \leq\,
\mu_{\rm max} (V\otimes (\bigoplus_{i=1}^{d-1} {\mathcal L}_i))\, \leq\,
{\rm \max}\{\mu(V\otimes{\mathcal L}_i)\}_{i=1}^{d-1}\, ,
$$
because $V\otimes{\mathcal L}_i$ is semistable. On the other hand,
$$
\mu(V\otimes{\mathcal L}_i)\, <\, \mu(V)\, ,
$$
because $\text{deg}({\mathcal L}_i)\, <\, 0$. Combining these, we have
$$
\mu_{\rm max} ( V\otimes ((f_*{\mathcal O}_C)/{\mathcal O}_D)) \, <\, \mu (V)\, ,
$$
giving the statement of the proposition.

If the map $f$ is not Galois, consider the smallest Galois extension
\begin{equation}\label{p1}
F\,:\,\widehat{C}\,\longrightarrow\, D
\end{equation}
such that there is a morphism $\widehat{f}\,:\,\widehat{C}\,\longrightarrow\, C$ for which
\begin{equation}\label{q1}
f\circ\widehat{f}\, =\, F\, .
\end{equation}
Note that $\widehat{C}$ is irreducible and smooth, and $F$ is separable. From \eqref{q1} it follows that
\begin{equation}\label{q2}
f_*{\mathcal O}_C\, \subset\, F_*{\mathcal O}_{\widehat C}\, .
\end{equation}

First assume that the map $F$ in \eqref{p1} is genuinely ramified. From
\eqref{q2} it follows that
\begin{equation}\label{s1}
(f_*{\mathcal O}_C)/{\mathcal O}_D \,\subset\, ({F}_*{\mathcal 
O}_{\widehat C})/{\mathcal O}_D\, .
\end{equation}
Since $F$ is Galois, from Proposition \ref{sumofnegetivedegreelinebundles1} we know that
$({F}_*{\mathcal O}_{\widehat C})/{\mathcal O}_D$ is contained 
in a direct sum of line bundles of negative degree. Hence the subsheaf
$(f_*{\mathcal O}_C)/{\mathcal O}_D$ in \eqref{s1} is also contained
in a direct sum of line bundles of negative degree.
This implies that
$$
\mu_{\rm max} ( V\otimes ((f_*{\mathcal O}_C)/{\mathcal O}_D)) \, <\, \mu (V)\, ,
$$
giving the statement of the proposition.

Therefore, we now assume that $F$ is not genuinely ramified. Let
$$
(F_*{\mathcal O}_{\widehat C})_1\, \subset\,
F_*{\mathcal O}_{\widehat C}
$$
be the maximal semistable subbundle. Let
\begin{equation}\label{gl}
g\,:\, \widehat{D}\,\longrightarrow\, D
\end{equation}
be the \'etale cover defined by the spectrum of
the bundle $(F_*{\mathcal O}_{\widehat C})_1$ of ${\mathcal O}_D$--algebras (see Lemma \ref{maximaletalsubcover});
that the map $g$ in \eqref{gl} is \'etale follows from Lemma \ref{etalevsdegree0} and \eqref{y1}, because
$g_*{\mathcal O}_{\widehat{D}}\,=\, (F_*{\mathcal O}_{\widehat C})_1$ and
$(F_*{\mathcal O}_{\widehat C})_1$ is semistable.
We note that the Galois group $\text{Gal}(F)$ for $F$ acts naturally on $F_*{\mathcal O}_{\widehat C}$,
and this action of $\text{Gal}(F)$ preserves the subbundle $(F_*{\mathcal O}_{\widehat C})_1$; indeed, this
follows from the uniqueness of the maximal semistable subbundle
$(F_*{\mathcal O}_{\widehat C})_1$. Therefore, $\text{Gal}(F)$ acts
on $\widehat{D}$, and the map $g$ in \eqref{gl} is $\text{Gal}(F)$--equivariant for the trivial
action of $\text{Gal}(F)$ on $D$. Consequently, the covering $g$ in \eqref{gl} is Galois.

Consider the following commutative diagram
\begin{equation}\label{lcd}
\xymatrix{
\widehat{C} \ar@/_/[ddr]_-{\widehat{f}} \ar[dr]^-h \ar@/^/[drrr]^-{\widehat{g}} & & & \\
& C\times_D\widehat{D} \ar[rr]^-{\pi_2} \ar[d]^-{\pi_1} && \widehat{D} \ar[d]^-g \\
& C \ar[rr]^-f && D
}
\end{equation}
The existence of the map $h$ in \eqref{lcd} is evident. The map $f$ being genuinely ramified,
it follows from Lemma \ref{genuinerampi1surj} that the homomorphism between \'etale fundamental groups
$$f_*\, :\, \pi_1^{\rm et}(C)\,\longrightarrow\, \pi_1^{\rm et}(D)$$ induced
by $f$ is surjective. This implies that the fiber product $C\times_D\widehat{D}$ is connected.
The diagram in \eqref{lcd} should not be confused with the one in \eqref{d1} --- in \eqref{lcd},
$C\times_D\widehat{D}$ is smooth as $g$ is \'etale.

We will prove that the map $\widehat{g}$ in \eqref{lcd} is genuinely ramified and Galois. For this,
first recall the earlier observation that $\text{Gal}(F)$ acts on $\widehat{D}$. The map $\widehat{g}$ is evidently
equivariant for the actions of $\text{Gal}(F)$ on $\widehat{C}$ and $\widehat{D}$.
This immediately implies that the map $\widehat{g}$ is Galois. From
Corollary \ref{cor1} it follows that $\widehat{g}$ is genuinely ramified.

We will next prove that $\widehat{C}\times_D\widehat{C}$ is a disjoint union of curves isomorphic to
$\widehat{C}\times_{\widehat{D}}
\widehat{C}$. For this, first note that $\widehat{C}\times_D\widehat{C}$ maps to
$\widehat{D}\times_D\widehat{D}$, and the curve $\widehat{D}\times_D\widehat{D}$ is a disjoint union of copies of 
$\widehat{D}$ as $g$ is \'etale Galois. The component of $\widehat{C}\times_D\widehat{C}$ lying over any of these
copies of $\widehat{D}$ is 
isomorphic to $\widehat{C}\times_{\widehat{D}}\widehat{C}$, and therefore
$\widehat{C}\times_D\widehat{C}$ is a disjoint union of curves isomorphic to
$\widehat{C}\times_{\widehat{D}}\widehat{C}$.

{}From \eqref{q2} we have
\begin{equation}\label{re11}
F^*f_*{\mathcal O}_C\, \subset\, F^*F_*{\mathcal O}_{\widehat C}\, .
\end{equation}
Since $\widehat{C}\times_D\widehat{C}$ is a disjoint union of curves isomorphic to
$\widehat{C}\times_{\widehat{D}}\widehat{C}$, from \eqref{t1} it follows that 
$F^*F_*{\mathcal O}_{\widehat C}$ is a direct sum of copies of $\widehat{g}^*\widehat{g}_*{\mathcal O}_{\widehat C}$.
It was shown above that $\widehat{g}$ is genuinely ramified and Galois. So from
Proposition \ref{sumofnegetivedegreelinebundles1} we know that
$\widehat{g}^*((\widehat{g}_*{\mathcal O}_{\widehat C})/{\mathcal O}_{\widehat{D}})$ is
contained in a direct sum of line bundles of negative degree. 

Since $\widehat{g}$ is genuinely ramified, we know from Lemma \ref{genuinerampi1surjcurves}, Remark \ref{rem1}
and \eqref{y1} that
$$
{\mathcal O}_{\widehat C}\,=\, H^0({\widehat C},\, \widehat{g}^*\widehat{g}_*{\mathcal O}_D)
\otimes {\mathcal O}_{\widehat C}
$$
is the maximal semistable subbundle of $\widehat{g}^*\widehat{g}_*{\mathcal O}_D$. Since
$F^*F_*{\mathcal O}_{\widehat C}$ is a direct sum of copies of $\widehat{g}^*\widehat{g}_*{\mathcal O}_{\widehat C}$,
this implies that
\begin{equation}\label{re9}
H^0({\widehat C},\, F^*F_*{\mathcal O}_{\widehat C}) \otimes {\mathcal O}_{\widehat C}\, \subset\,
F^*F_*{\mathcal O}_{\widehat C}
\end{equation}
is the maximal semistable subbundle. On the other hand, we have
$F^*f_*{\mathcal O}_C\,=\, \widehat{f}^*f^*f_*{\mathcal O}_C$. So from
Lemma \ref{genuinerampi1surjcurves}, Remark \ref{rem1}
and \eqref{y1} we know that
\begin{equation}\label{re10}
{\mathcal O}_{\widehat C}\,=\,
H^0({\widehat C},\, F^*f_*{\mathcal O}_C) \otimes {\mathcal O}_{\widehat C} \, \subset\,
F^*f_*{\mathcal O}_C
\end{equation}
is the maximal semistable subbundle.

Consider the inclusion homomorphism in \eqref{re11}. From \eqref{re9} and \eqref{re10} we conclude that
using this homomorphism, the
quotient $F^*((f_*{\mathcal O}_C)/{\mathcal O}_D)$ is contained in
\begin{equation}\label{u1}
F^*F_*{\mathcal O}_{\widehat C}/(H^0({\widehat
C},\, F^*F_*{\mathcal O}_{\widehat C})\otimes {\mathcal O}_{\widehat C})\, .
\end{equation}
Since $F^*F_*{\mathcal O}_{\widehat C}$ is a direct sum of copies of
$\widehat{g}^*\widehat{g}_*{\mathcal O}_{\widehat C}$,
the vector bundle in \eqref{u1} is isomorphic to a direct sum of copies of
$\widehat{g}^*((\widehat{g}_*{\mathcal O}_{\widehat C})/{\mathcal O}_{\widehat{D}})$.

It was shown above that
$\widehat{g}^*((\widehat{g}_*{\mathcal O}_{\widehat C})/{\mathcal O}_{\widehat{D}})$ is
contained in a direct sum of line bundles of negative degree. Therefore, the vector bundle in \eqref{u1} is
contained in a direct sum of line bundles of negative degree. Consequently, the subsheaf
$$
F^*((f_*{\mathcal O}_C)/{\mathcal O}_D)\, \subset\,
F^*F_*{\mathcal O}_{\widehat C}/(H^0({\widehat
C},\, F^*F_*{\mathcal O}_{\widehat C})\otimes {\mathcal O}_{\widehat C})
$$
is also contained in a direct sum of line bundles of negative degree.

Since $F^*(f_*{\mathcal O}_C)/{\mathcal O}_D)$ is 
contained in a direct sum of line bundles of negative degree, we conclude that
$$
\mu_{\rm max} ((F^*V)\otimes (F^*((f_*{\mathcal O}_C)/{\mathcal O}_D))) \, <\, \mu (F^*V)\,;
$$
note that $F^*V$ is semistable by Remark \ref{rem1} as $F$ is separable. From this it follows that
$$
\mu_{\rm max} (V\otimes ((f_*{\mathcal O}_C)/{\mathcal O}_D))\,=\,
\mu_{\rm max}(F^*(V\otimes ((f_*{\mathcal O}_C)/{\mathcal O}_D)))/\text{deg}(F)
$$
$$
<\, \mu (F^*V)/\text{deg}(F)\,=\, \mu (V)\, ,
$$
because $F^*V$ is semistable. This completes the proof.
\end{proof}

\begin{remark}
When the characteristic of the base field $k$ is zero, the tensor product of two semistable
bundles remains semistable \cite[p.~285, Theorem 3.18]{RR}. We note that Lemma \ref{negetiveslope}
is a straight-forward consequence of it, provided the characteristic of $k$ is zero.
\end{remark}

\begin{lemma} \label{lemma3.8}
Let $f\,:\,C\,\longrightarrow\, D$ be a genuinely ramified morphism between
irreducible smooth projective curves. 
Let $V$ and $W$ be two semistable vector bundles on $D$ with $$\mu(V)\,=\, \mu(W)\, .$$ Then 
$$H^0(D,\, {\rm Hom}(V,\,W)) \,=\, H^0(C,\, {\rm Hom}(f^*V,\, f^*W))\, .$$ 
\end{lemma}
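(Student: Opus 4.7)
The plan is to reduce the identity to a vanishing statement and then invoke Lemma \ref{negetiveslope}. By the projection formula together with the $(f^*,f_*)$-adjunction, one has
$$H^0(C,\,{\rm Hom}(f^*V,\,f^*W))\,=\,{\rm Hom}_C(f^*V,\,f^*W)\,=\,{\rm Hom}_D(V,\,f_*f^*W)\,=\,{\rm Hom}_D(V,\,W\otimes f_*{\mathcal O}_C),$$
so the map whose bijectivity we want is the one induced by the inclusion ${\mathcal O}_D\,\hookrightarrow\, f_*{\mathcal O}_C$.

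The next step is to use the short exact sequence
$$0\,\longrightarrow\,{\mathcal O}_D\,\longrightarrow\, f_*{\mathcal O}_C\,\longrightarrow\, (f_*{\mathcal O}_C)/{\mathcal O}_D\,\longrightarrow\, 0.$$
Tensoring with the locally free sheaf $W$ and then applying ${\rm Hom}(V,\,-)$ produces the left-exact sequence
$$0\,\longrightarrow\, {\rm Hom}_D(V,\,W)\,\longrightarrow\,{\rm Hom}_D(V,\,W\otimes f_*{\mathcal O}_C)\,\longrightarrow\, {\rm Hom}_D(V,\,W\otimes ((f_*{\mathcal O}_C)/{\mathcal O}_D)).$$
Combined with the identification above, the lemma is equivalent to the vanishing
$${\rm Hom}_D(V,\,W\otimes ((f_*{\mathcal O}_C)/{\mathcal O}_D))\,=\, 0.$$

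For this vanishing, I would argue by contradiction: a nonzero homomorphism from $V$ to $W\otimes ((f_*{\mathcal O}_C)/{\mathcal O}_D)$ would, by \eqref{a1}, force
$$\mu_{\rm min}(V)\,\leq\,\mu_{\rm max}\bigl(W\otimes ((f_*{\mathcal O}_C)/{\mathcal O}_D)\bigr).$$
Since $V$ is semistable, $\mu_{\rm min}(V)\,=\,\mu(V)\,=\,\mu(W)$. But Lemma \ref{negetiveslope} (applied to the semistable bundle $W$ and the genuinely ramified map $f$) gives
$$\mu_{\rm max}\bigl(W\otimes ((f_*{\mathcal O}_C)/{\mathcal O}_D)\bigr)\,<\,\mu(W),$$
yielding the contradiction $\mu(W)\,<\,\mu(W)$. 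The main (and only nontrivial) ingredient is Lemma \ref{negetiveslope}; the remainder of the argument is a formal manipulation with the projection formula and the extension of $\mathcal O_D$ by $(f_*{\mathcal O}_C)/{\mathcal O}_D$, so no separate obstacle is expected.
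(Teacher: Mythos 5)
Your proposal is correct and follows essentially the same route as the paper: the projection-formula/adjunction identification of $H^0(C,\,{\rm Hom}(f^*V,\,f^*W))$ with $H^0(D,\,{\rm Hom}(V,\,W\otimes f_*{\mathcal O}_C))$, followed by the vanishing of ${\rm Hom}(V,\,W\otimes((f_*{\mathcal O}_C)/{\mathcal O}_D))$ deduced from Lemma \ref{negetiveslope}. The only cosmetic difference is that the paper derives this vanishing by running through the Harder--Narasimhan filtration of $W\otimes((f_*{\mathcal O}_C)/{\mathcal O}_D)$, whereas you apply \eqref{a1} directly with $\mu_{\rm min}(V)=\mu(V)$; both are valid.
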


\begin{proof}
Using the projection formula, and the fact that $f$ is a finite map, we have
$$
H^0(C,\, {\rm Hom}(f^*V,\, f^*W)) \,\cong\,H^0(D,\, f_*{\rm Hom}(f^*V,\, f^*W))
\,\cong\,H^0(D,\, f_*f^*{\rm Hom}(V,\, W))
$$
\begin{equation}\label{j1}
\,\cong\,H^0(D,\, {\rm Hom}(V,\, W)\otimes f_*{\mathcal O}_C)
\,\cong\,H^0(D,\, {\rm Hom}(V,\, W\otimes f_*{\mathcal O}_C))\, .
\end{equation}
Let
$$
0\,=\, B_0\, \subset\, B_1\, \subset\, \cdots \, \subset\, B_{m-1}\, \subset\, B_m\,=\,
W\otimes ((f_*{\mathcal O}_C)/{\mathcal O}_D)
$$
be the Harder--Narasimhan filtration of $W\otimes ((f_*{\mathcal O}_C)/{\mathcal O}_D)$
\cite[p.~16, Theorem 1.3.4]{HL}. Since $W$ is semistable, and $f$ is genuinely ramified, from
Lemma~\ref{negetiveslope} we know that 
$$
\mu(B_i/B_{i-1})\, \leq\, \mu(B_1)\, =\, \mu_{\rm max}(W\otimes ((f_*{\mathcal O}_C)/{\mathcal O}_D))
\, <\, \mu(W)
$$
for all $1\, \leq\, i\, \leq\, m$. In view this and the given condition that $\mu(V)\,=\, \mu(W)$, from
\eqref{a1} we conclude that
$$
H^0(D,\, {\rm Hom}(V,\, B_i/B_{i-1}))\,=\, 0
$$
for all $1\, \leq\, i\, \leq\, m$; note that both $V$ and $B_i/B_{i-1}$ are semistable. This implies that
$$
H^0(D,\, {\rm Hom}(V,\, W\otimes ((f_*{\mathcal O}_C)/{\mathcal O}_D)))\,=\ 0\, .
$$
Consequently, we have
$$
H^0(D,\, {\rm Hom}(V,\, W\otimes f_*{\mathcal O}_C))\,=\,
H^0(D,\, {\rm Hom}(V,\, W))
$$ 
by examining the exact sequence
$$
0 \,\longrightarrow\,{\rm Hom}(V,\, W)\,\longrightarrow\, {\rm Hom}(V,\, W\otimes f_*{\mathcal O}_C)
\,\longrightarrow\, {\rm Hom}(V,\, W\otimes ((f_*{\mathcal O}_C)/{\mathcal O}_D))\,\longrightarrow\, 0\, .
$$
{}From this and \eqref{j1} it follows that
$$H^0(C,\, {\rm Hom}(f^*V,\, f^*W))\,=\, H^0(D,\, {\rm Hom}(V,\,W))\, .$$
This completes the proof.
\end{proof}

\begin{theorem}\label{thm1}
Let $f\,:\,C\,\longrightarrow\, D$ be a genuinely ramified morphism
between irreducible smooth projective curves. Let $V$ be a stable vector
bundle on $D$. Then the pulled back vector bundle $f^*V$ is also stable.
\end{theorem}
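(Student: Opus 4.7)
The plan is to argue by contradiction, assuming $f^*V$ is not stable. Since $f^*V$ is semistable by Remark~\ref{rem1}, failure of stability provides a proper nontrivial subbundle $W \subsetneq f^*V$ with $\mu(W) = \mu(f^*V)$. The first step is to apply Lemma~\ref{lemma3.8} with both of its semistable bundles taken equal to the stable $V$, obtaining $H^0(C,\,\mathrm{End}(f^*V)) \cong H^0(D,\,\mathrm{End}(V)) = k$, where the last equality is Schur's lemma for the stable bundle $V$. Thus every endomorphism of $f^*V$ is a scalar; that is, $f^*V$ is \emph{simple}.

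Since any simple polystable bundle is a single stable summand (hence stable), it suffices to upgrade simplicity to polystability. Let $S = S(f^*V)$ denote the socle of $f^*V$, i.e.\ the unique maximal polystable subbundle of slope $\mu(f^*V)$; the goal is to show $S = f^*V$. The strategy is to study $S$ after base change to the Galois closure $F\colon \widehat{C} \to D$ of $f$, with $F = f\circ \widehat{f}$ and $G = \mathrm{Gal}(F)$. The pullback $F^*V$ carries the canonical $G$-linearization, and its socle $S(F^*V)$ is $G$-invariant by functoriality. Galois descent for saturated subbundles---valid even in the ramified case since a saturated subbundle is detected on the \'etale locus of $F$---identifies $S(F^*V)=F^*U$ for a saturated subbundle $U \subseteq V$. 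Since $\mu(U)=\mu(V)$ and $V$ is stable, $U=V$, whence $S(F^*V)=F^*V$: that is, $F^*V$ is polystable on $\widehat{C}$.

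Finally, descend polystability from $\widehat{C}$ to $C$ along the Galois subcover $\widehat{f}\colon\widehat{C}\to C$ with group $H=\mathrm{Gal}(\widehat{C}/C)\subseteq G$. The subbundle $\widehat{f}^*S\subseteq F^*V$ is $H$-invariant, polystable, and of the same slope as the polystable bundle $F^*V$, hence a direct summand. Using the canonical isotypic decomposition of $F^*V$ together with the fact that $H$ permutes its isotypic components, one produces an $H$-equivariant complement to $\widehat{f}^*S$; by Galois descent this yields a splitting $f^*V = S\oplus Q$ on $C$. If $Q$ were nonzero, the projections onto $S$ and $Q$ would furnish two linearly independent idempotents in $\mathrm{End}(f^*V)$, contradicting $\mathrm{End}(f^*V)=k$ from the first step. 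Hence $Q=0$, so $f^*V=S$ is polystable, and combined with simplicity, $f^*V$ is stable---the required contradiction.

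The principal technical obstacle is this last descent step: arranging an $H$-equivariant complement to $\widehat{f}^*S$ requires a Maschke-type argument on each isotypic component, which is immediate in characteristic zero but needs extra care when $\mathrm{char}(k)$ divides $|H|$. The Galois case of the theorem---when $f$ itself is Galois and $\widehat{f}$ is trivial---is significantly cleaner, as the second paragraph directly yields the conclusion. The general case is reduced to a Galois genuinely ramified cover via the technique already used in the proof of Lemma~\ref{negetiveslope}---passing through an intermediate \'etale cover of $D$ to produce an appropriate Galois genuinely ramified subcover---which sidesteps the characteristic subtlety.
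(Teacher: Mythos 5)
Your skeleton matches the paper's for its first two thirds: simplicity of $f^*V$ via Lemma \ref{lemma3.8} applied to the genuinely ramified $f$, and polystability of $F^*V$ on the Galois closure via invariance of the socle and descent of saturated invariant subbundles (the latter is legitimate even though $F$ is ramified, since a $\mathrm{Gal}(F)$--invariant saturated subsheaf is determined by its generic fibre; the paper uses the same device for $g$ and $\widehat{g}$). The gap is exactly where you locate it, and the sentence you offer does not repair it. Producing an $H$--equivariant complement to $\widehat{f}^*S$ inside $F^*V=\widehat{f}^*(f^*V)$ is essentially equivalent to splitting $S$ off inside $f^*V$ itself: since $(\widehat{f}_*{\mathcal O}_{\widehat C})^H={\mathcal O}_C$, the $H$--invariant global endomorphisms of $\widehat{f}^*(f^*V)$ are precisely the pullbacks of endomorphisms of $f^*V$, so an invariant projection upstairs is the same datum as a projection downstairs --- which, given simplicity, is the statement being proved. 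Maschke averaging supplies it when $\mathrm{char}(k)\nmid \#H$, so your argument is complete in characteristic zero; but when $\mathrm{char}(k)$ divides $\#H$ a polystable bundle with $H$--action need not be $H$--equivariantly semisimple, and an invariant direct summand can fail to have an invariant complement.

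The proposed escape --- ``reduce to a Galois genuinely ramified subcover as in the proof of Lemma \ref{negetiveslope}'' --- is not a reduction. Knowing the theorem for the Galois genuinely ramified map $\widehat{g}\,:\,\widehat{C}\,\longrightarrow\,\widehat{D}$ tells you that $F^*V=\widehat{g}^*g^*V=\bigoplus_j\widehat{g}^*V_j$ is polystable with identified stable summands, but it does not convert a proper slope--$\mu$ subbundle $S\subset f^*V$ into a proper subbundle of $V$: neither $f$ nor the projection $\pi_2\,:\,C\times_D\widehat{D}\,\longrightarrow\,\widehat{D}$ is Galois, so there is no group to descend along at the crucial stage. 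The paper's resolution of precisely this point is the one substantive mechanism missing from your proposal: pull $S$ back along the \emph{\'etale} Galois map $\pi_1$ to obtain a slope--$\mu$ direct summand $\pi_1^*S\subset\pi_2^*g^*V$, encode it as the right ideal $\Theta=\{\gamma\,\mid\,\gamma(\pi_2^*g^*V)\subset\pi_1^*S\}$ of $H^0(C\times_D\widehat{D},\,{\rm End}(\pi_2^*g^*V))$, transport $\Theta$ through the algebra isomorphism with $H^0(\widehat{D},\,{\rm End}(g^*V))$ furnished by Lemma \ref{lemma3.8} for the genuinely ramified $\pi_2$, and so manufacture a $\mathrm{Gal}(g)$--invariant subbundle of $g^*V$ that descends to $V$ and forces $S=f^*V$. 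This works in every characteristic and replaces, rather than implements, your Maschke step; without something of this kind your proof covers only the case $\mathrm{char}(k)\nmid\deg(\widehat{f}\,)$.
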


\begin{proof}
Consider the Galois extension $F\,:\,\widehat{C}\,\longrightarrow\, D$ and the diagram in
\eqref{lcd}. Since $V$ is stable, from Lemma~\ref{lemma3.8} it follows that $f^*V$ is simple. 
As $V$ is semistable, it follows that $g^*V$ is also semistable, where $g$
is the map in \eqref{lcd}. Let $$E\, \subset\, g^*V$$ be the
unique maximal polystable subbundle with $\mu(E)\,=\, \mu(g^*V)$ \cite[p.~23, Lemma 1.5.5]{HL}; this
subbundle $E$ is called the socle of $g^*V$. Since
$g^*V$ is preserved by the action of the Galois group $\text{Gal}(g)$ on 
$g^*V$, there is a unique subbundle $E'\, \subset\, V$ such that
$$E\,=\, g^*E'\, \subset\, g^*V\, .$$ As
$V$ is stable, we conclude that $E'\,=\, V$, and hence $g^*V$ is polystable. So we have a
direct sum decomposition
\begin{equation}\label{gv}
g^*V\,=\,\bigoplus_{j=1}^m V_j\, ,
\end{equation}
where each $V_j$ is stable with $\mu(V_j)\,=\, \mu(g^*V)$.

Take any $1\,\leq\, j\, \leq\, m$, where $m$ is the integer in \eqref{gv}.
Since $V_j$ is stable, and $\widehat{g}$ in \eqref{lcd} is Galois (this was shown in the
proof of Lemma \ref{negetiveslope}),
repeating the above argument involving the socle we conclude that $\widehat{g}^*V_j$ is also
polystable. On the other hand, as $\widehat{g}$ is genuinely ramified (see the proof of Lemma
\ref{negetiveslope}), from Lemma~\ref{lemma3.8} it follows that
\begin{equation}\label{m1}
H^0(\widehat{C},\, \text{End}(\widehat{g}^*V_j))\, =\, H^0(\widehat{D},\, \text{End}(V_j))\, .
\end{equation}
But $H^0(\widehat{D},\, \text{End}(V_j))\,=\, k$, because $V_j$ is stable. Hence from \eqref{m1}
we know that $H^0(\widehat{C},\, \text{End}(\widehat{g}^*V_j))\, =\,k$. This implies that
$\widehat{g}^*V_j$ is stable, because it is polystable.

Since $\widehat{g}^*V_j$ is stable, and $\pi_2\circ h\,=\, \widehat{g}$ (see \eqref{lcd}),
we conclude that $\pi_2^*V_j$ is also stable with
$$
\mu(\pi_2^*V_j)\,=\, \mu(\pi_2^*g^*V)
$$
for all $1\,\leq\, j\, \leq\, m$. This implies that
\begin{equation}\label{ho2}
\pi_1^*f^*V \,= \,\pi_2^*g^*V \,=\, \bigoplus_{j=1}^m \pi_2^* V_j
\end{equation}
is polystable. 

The map $\pi_2$ is genuinely ramified because $\widehat g$ is genuinely ramified
(see the proof of Lemma \ref{negetiveslope}) and ${\widehat g}\,= \,h\circ\pi_2$. Indeed, if
$\pi_2$ factors through an \'etale covering of $\widehat D$, then the
genuinely ramified map ${\widehat g}$ factors through that \'etale covering of
$\widehat D$, and hence from Proposition \ref{genuinerampi1surj}
it follows that $\pi_2$ is genuinely ramified.

Since $\pi_2$ is genuinely ramified, and each $V_j$ in \eqref{ho2} is stable,
from Lemma \ref{lemma3.8} it follows that
\begin{equation}\label{ho1}
H^0(C\times_D\widehat{D},\, \text{Hom}(\pi_2^* V_i,\, \pi_2^* V_j))\,=\,
H^0(\widehat{D},\, \text{Hom}(V_i,\, V_j))
\end{equation}
for all $1\,\leq\, i,\, j\, \leq\, m$. We know that $V_i$ and $\pi_2^* V_i$ are stable. So from
\eqref{ho1} we conclude that $V_i$ is isomorphic to $V_j$ if and only if $\pi_2^* V_i$ is
isomorphic to $\pi_2^* V_j$. From \eqref{ho1} it also follows that
\begin{equation}\label{ho3}
H^0(C\times_D\widehat{D},\, \text{End}(\pi^*_2g^*V))\,=\,
H^0(\widehat{D},\, \text{End}(g^*V))\, ;
\end{equation}
we note that this also follows from Lemma \ref{lemma3.8}.

The vector bundle $f^*V$ on $C$ is semistable, because $V$ is semistable and $f$ is separable.
Let
\begin{equation}\label{m-1}
0\, \not=\, S\,\subset\, f^*V
\end{equation}
be a stable subbundle with
\begin{equation}\label{k-1}
\mu(S)\,=\, \mu(f^*V)\, .
\end{equation}

Since $S$ is stable with $\mu(S)\,=\, \mu(f^*V)$, and the map $\pi_1$ is Galois, using the earlier 
argument involving the socle we conclude that
\begin{equation}\label{m2}
\widetilde{\mathbb S}\,:= \,\pi_1^*S\, \subset\, \pi_1^*f^*V\,=\, \pi_2^*g^*V
\,=\, \bigoplus_{j=1}^m \pi_2^* V_j\, =:\, \widetilde{V}
\end{equation}
is a polystable subbundle with $\mu(\widetilde{\mathbb S})\,=\, \mu(\widetilde{V})$.

Consider the associative algebra $H^0(C\times_D\widehat{D},\, {\rm End}(\widetilde{V}))$, where $\widetilde{V}$
is the vector bundle in \eqref{m2}. Define the right ideal
\begin{equation}\label{th}
\Theta \, :=\, \{\gamma\, \, \in\,H^0(C\times_D\widehat{D},\, {\rm End}(\widetilde{V}))\, \mid\,
\gamma(\widetilde{V})\, \subset\, \widetilde{\mathbb S}\}\, \subset\,
H^0(C\times_D\widehat{D},\, {\rm End}(\widetilde{V}))\, ,
\end{equation}
where $\widetilde{\mathbb S}$ is the subbundle in \eqref{m2}. The subbundle
$\widetilde{\mathbb S}\, \subset\, \widetilde{V}$ is a direct summand, because $\widetilde{V}$ is
polystable, and $\mu(\widetilde{\mathbb S})\,=\, \mu(\widetilde{V})$. Consequently,
$\widetilde{\mathbb S}$ coincides with the subbundle
generated by the images of endomorphisms lying in the right ideal ${\Theta}$. Since $\widetilde{V}$
is semistable, the image of any endomorphism of it is a subbundle.

Consider $\widetilde{V}$ in \eqref{m2}. The identification
$$
H^0(C\times_D\widehat{D},\, {\rm End}(\widetilde{V})) \,=\,
H^0(\widehat{D},\, \text{End}(g^*V))
$$
in \eqref{ho3} preserves the associative algebra structures of
$$H^0(C\times_D\widehat{D},\, {\rm End}(\widetilde{V}))\ \ \text{ and }\ \ 
H^0(\widehat{D},\, \text{End}(g^*V))\, ,$$
because it sends any $\gamma\, \in\, H^0(\widehat{D},\,{\rm End}(g^*V))$ to $\pi^*_2\gamma$. Let
\begin{equation}\label{th2}
\widetilde{\Theta}\,\subset\,H^0(\widehat{D},\, \text{End}(g^*V))
\end{equation}
be the right ideal that corresponds to $\Theta$ in \eqref{th} by the identification in \eqref{ho3}.
Let
\begin{equation}\label{m4}
\overline{\mathcal S}\, \subset\, g^*V
\end{equation}
be the subbundle generated by the images of endomorphisms lying in the right ideal $\widetilde{\Theta}$
in \eqref{th2}. Since $g^*V$ is semistable, the image of any endomorphism of it is a subbundle. From the
above construction of $\overline{\mathcal S}$ it follows that
$$
\widetilde{\mathbb S}\,=\, \pi^*_2 \overline{\mathcal S}\, ,
$$
where $\widetilde{\mathbb S}$ is the subbundle in \eqref{th}.

The isomorphism in \eqref{ho3} is equivariant for the actions of
the Galois group $\text{Gal}(\pi_1)\,=\, \text{Gal}(g)$ on
$$
H^0(C\times_D\widehat{D},\, {\rm End}(\widetilde{V}))\,=\,
H^0(C\times_D\widehat{D},\, {\rm End}(\pi^*_1f^*V))
$$
and $H^0(\widehat{D},\,{\rm End}(g^*V))$, because the isomorphism
sends any $\gamma\, \in\, H^0(\widehat{D},\,{\rm End}(g^*V))$ to $\pi^*_2\gamma$. Since
$\widetilde{\mathbb S}\,= \,\pi_1^*S$ in \eqref{m2} is preserved under the action of
$\text{Gal}(\pi_1)$ on $\pi^*_1f^*V$, it follows that the action of $\text{Gal}(\pi_1)$
on $H^0(C\times_D\widehat{D},\, {\rm End}(\pi^*_1f^*V))$ preserves the right ideal
$\Theta$ in \eqref{th}. These together imply that the action of $\text{Gal}(g)$ on
$H^0(\widehat{D},\,{\rm End}(g^*V))$ preserves the right ideal $\widetilde{\Theta}$ in \eqref{th2}.
Consequently, the subbundle
$$
\overline{\mathcal S}\, \subset\, g^*V
$$
in \eqref{m4} is preserved under the action of $\text{Gal}(g)$ on $g^*V$.

Since $\overline{\mathcal S}$ is preserved under the action of $\text{Gal}(g)$ on $g^*V$,
there is a unique subbundle
$$
{\mathbb S}_0\, \subset\, V
$$
such that $\overline{\mathcal S}\,=\,g^*{\mathbb S}_0\, \subset\, g^*V$. Given that $V$ is stable,
and $\mu({\mathbb S}_0)\,=\, \mu(V)$ (this follows from \eqref{k-1}), we
now conclude that ${\mathbb S}_0\, =\, V$. Hence the subbundle $S$ in \eqref{m-1} coincides with $f^*V$.
Therefore, we conclude that $f^*V$ is stable.
\end{proof}

\section{Characterizations of genuinely ramified maps}

Let $D$ be an irreducible smooth projective curve, and let
$$\phi\,:\, X\,\longrightarrow\, D$$ be a nontrivial \'etale covering with $X$
irreducible. Let $L$ be a line bundle on $X$ of degree one.

\begin{proposition}\label{prop1}\mbox{}
\begin{enumerate}
\item The direct image $\phi_*L$ is a stable vector bundle on $D$.

\item The pulled back bundle $\phi^*\phi_*L$ is not stable.
\end{enumerate}
\end{proposition}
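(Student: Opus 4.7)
The plan is to first pin down the numerical invariants of $\phi_*L$. Because $\phi$ is \'etale of degree $d$, the direct image $\phi_*L$ is locally free of rank $d$, and the identity $\chi(X,L)=\chi(D,\phi_*L)$ combined with the \'etale Riemann--Hurwitz relation $g(X)-1=d(g(D)-1)$ yields $\deg(\phi_*L)=\deg L=1$. Hence $\mu(\phi_*L)=1/d$. All of part (1) then hinges on a small numerical observation: any rational number $r/d$ with $1\le r\le d-1$ is \emph{not} an integer, so no proper nonzero subbundle of $\phi_*L$ can have slope exactly $1/d$.

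For part (1), I would first use Lemma \ref{le1} applied to the (trivially semistable) line bundle $L$ on $X$ to deduce
$$\mu_{\rm max}(\phi_*L)\,\le\,\mu(L)/\deg(\phi)\,=\,1/d\,=\,\mu(\phi_*L),$$
which shows $\phi_*L$ is semistable. Now suppose $V\subsetneq\phi_*L$ is a proper nonzero subbundle of rank $r$ and degree $e$ with $\mu(V)\ge\mu(\phi_*L)=1/d$. Semistability forces $\mu(V)=1/d$, so $e=r/d$ must be an integer. Since $1\le r\le d-1$ (proper and nonzero), this is impossible unless $d=1$, contradicting that $\phi$ is nontrivial. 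Therefore $\phi_*L$ admits no destabilizing subbundle and is stable.

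For part (2), I would invoke the counit of adjunction $\phi^*\phi_*L\,\longrightarrow\,L$. Because $\phi$ is \'etale, this map is fiberwise the projection $\bigoplus_{x'\in\phi^{-1}(\phi(x))}L_{x'}\,\to\,L_x$, hence surjective. Let $K$ be its kernel; then $K$ is a subbundle of $\phi^*\phi_*L$ of rank $d-1\ge 1$ and degree $\deg(\phi^*\phi_*L)-\deg L=d\cdot 1-1=d-1$, so $\mu(K)=1=\mu(\phi^*\phi_*L)$. This exhibits a proper nonzero subbundle achieving the slope of the whole bundle, so $\phi^*\phi_*L$ is not stable.

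There is really no difficult step here: the whole argument rests on the slope $1/d$ having denominator exactly $d$ (for (1)) and on the surjectivity of the adjunction counit (for (2)). The only point requiring any verification is the latter, and it is immediate from the local structure of an \'etale map.
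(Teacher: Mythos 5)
Your proof is correct and follows essentially the same route as the paper: both rest on $\deg(\phi_*L)=1$ together with the slope bound $\mu_{\rm max}(\phi_*L)\le \mu(L)/d$ (you cite Lemma \ref{le1}, the paper re-runs the adjunction argument), the coprimality of rank and degree to pass from semistable to stable, and the counit $\phi^*\phi_*L\to L$ for part (2). The only cosmetic difference is that in (2) you exhibit the kernel of the counit as an equal-slope subbundle, whereas the paper observes directly that a nonzero map onto the equal-slope line bundle $L$ already contradicts stability.
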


\begin{proof}
Let $\delta$ be the degree of
the map $\phi$; note that $\delta\, >\, 1$, because $\phi$ is nontrivial.

We have $\text{deg}(\phi_*L)\,=\, \text{deg}(L)\,=\, 1$ \cite[p.~306, Ch.~IV, Ex.~2.6(a)~and~2.6(d)]{Ha}.
This implies that
$$
\text{deg}(\phi^*\phi_*L)\,=\, \delta\cdot\text{deg}(L)\,=\,\delta\, .
$$
We have a natural homomorphism
\begin{equation}\label{hw0}
H\, :\, \phi^*\phi_*L\, \longrightarrow\, L\, .
\end{equation}
This $H$ has the following property: For any coherent subsheaf $W\, \subset\, \phi_*L$, the restriction
of $H$ to $\phi^*W\, \subset\, \phi^*\phi_*L$
\begin{equation}\label{hw}
H_W\, :=\, H\vert_{\phi^*W}\, :\, \phi^* W\, \longrightarrow\, L
\end{equation}
is a nonzero homomorphism. Note that for any point $y\,\in\, D$, the fiber
$(\phi^*\phi_*L)_y$ is $H^0(\phi^{-1}(y),\, L\vert_{\phi^{-1}(y)})$, and hence
a nonzero element of $(\phi^*\phi_*L)_y$ must be nonzero at some point of $\phi^{-1}(y)$.

We will first show that $\phi_*L$ is semistable.
To prove this by contradiction, let $V\, \subset\, \phi_*L$ be a semistable subbundle with
\begin{equation}\label{hw2}
\mu(V)\, >\, \mu(\phi_*L)\,=\, \frac{1}{\delta}\, .
\end{equation}
Consider the nonzero homomorphism
$$
H_V\, :=\, H\vert_{\phi^*V}\, :\, \phi^* V\, \longrightarrow\, L
$$
in \eqref{hw}. We have $\mu(\phi^* V)\,=\, \delta\cdot \mu(V)\, >\, 1$ (see \eqref{hw2}), and
also $\phi^* V$ is semistable because $V$ is so. Consequently, $H_V$ contradicts \eqref{a1}. As
$\phi_*L$ does not contain any subbundle $V$ satisfying \eqref{hw2},
we conclude that $\phi_*L$ is semistable.

Since $\text{rk}(\phi_*L)$ is coprime to $\text{deg}(\phi_*L)$, the semistable vector
bundle $\phi_*L$ is also stable. This proves statement (1).

The vector bundle $\phi^*\phi_*L$ is not stable, because the homomorphism $H$ in \eqref{hw0} is nonzero
and $\mu(\phi^*\phi_*L)\,=\, \mu(L)$.
\end{proof}

\begin{proposition}\label{prop2}
Let $f\,:\,C\,\longrightarrow\,D$ be a nonconstant separable morphism
between irreducible smooth projective curves such that $f$ is not genuinely ramified.
Then there is a stable vector bundle $E$ on $D$ such that $f^*E$ is not stable.
\end{proposition}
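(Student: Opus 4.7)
The plan is to use Proposition \ref{genuinerampi1surj} to factor $f$ through a nontrivial \'etale cover $g\,:\,\widetilde{D}\,\longrightarrow\, D$, and then to exhibit the stable bundle $g_*L$ (with $L$ a degree-one line bundle on $\widetilde D$) produced by Proposition \ref{prop1} as a stable bundle on $D$ whose $f$-pullback fails to be stable.

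First I would invoke the implication (1)$\Longleftrightarrow$(2) of Proposition \ref{genuinerampi1surj}: since $f$ is not genuinely ramified, the proof of that proposition yields a factorization $f\,=\, g\circ h$ with $g\,:\,\widetilde{D}\,=\,{\rm Spec}\,{\mathcal V}\,\longrightarrow\, D$ \'etale of degree ${\rm rk}({\mathcal V})\,\geq\, 2$ and $h\,:\,C\,\longrightarrow\, \widetilde{D}$, where ${\mathcal V}\,\subsetneq\, f_*{\mathcal O}_C$ is the maximal semistable subbundle. I would then check that $\widetilde D$ is irreducible: at the generic point of $D$, the stalk of $\mathcal V$ is a $K(D)$-subalgebra of $K(C)$ containing $K(D)$, hence a subfield (a finite integral $K(D)$-subalgebra of a field is a field), so $\widetilde D$ has a unique generic point. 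Choosing any line bundle $L$ on $\widetilde D$ of degree one, Proposition \ref{prop1}(1) then gives that $E\,:=\, g_*L$ is a stable vector bundle on $D$.

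To show $f^*E\,=\, h^*(g^*g_*L)$ is not stable, I would use the adjunction counit $H\,:\, g^*g_*L\,\longrightarrow\, L$ (the same map appearing in the proof of Proposition \ref{prop1}(2)), which is nonzero. Since $h$ is nonconstant, hence dominant, pulling back preserves the nonvanishing of a section of a $\text{Hom}$-bundle, so
$$
h^*H\,:\, f^*E\,\longrightarrow\, h^*L
$$
is again nonzero. A slope computation using $\deg(g_*L)\,=\,\deg(L)\,=\, 1$ (valid because $g$ is \'etale) gives $\mu(f^*E)\,=\,\deg(f)/\deg(g)\,=\,\deg(h)\,=\,\mu(h^*L)$, while ${\rm rk}(f^*E)\,=\,\deg(g)\,\geq\, 2$. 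Hence the image of $h^*H$ is a rank-one proper quotient of $f^*E$ of slope at most $\mu(h^*L)\,=\,\mu(f^*E)$, contradicting stability of $f^*E$.

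The only technical point requiring some care is the irreducibility of $\widetilde D$, needed to invoke Proposition \ref{prop1} (which requires its source curve to be irreducible); this is handled by the generic-point argument sketched above. With that in hand, the rest of the proof is a direct application of Proposition \ref{prop1} together with the standard behavior of slope under pullback by a separable morphism of curves.
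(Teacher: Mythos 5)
Your proposal is correct and follows essentially the same route as the paper: factor $f$ through the nontrivial \'etale cover furnished by Proposition \ref{genuinerampi1surj} and take $E=g_*L$ for a degree-one line bundle $L$, with stability of $E$ coming from Proposition \ref{prop1}(1). The only (harmless) differences are that the paper concludes by citing Proposition \ref{prop1}(2) for the non-stability of $g^*g_*L$ and pulling that back along $h$, whereas you inline the same counit-plus-slope computation after pulling back by $h$, and that you make explicit the irreducibility of $\widetilde D$, which the paper leaves implicit in the construction of ${\rm Spec}\,{\mathcal V}$.
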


\begin{proof}
Since $f$ is not genuinely ramified, from Proposition \ref{genuinerampi1surj} we know that
there is a nontrivial \'etale covering
$$\phi\,:\, X\,\longrightarrow\, D$$
and a map $\beta\, :\, C\,\longrightarrow\, X$ such that $\phi\circ\beta\,=\, f$. As in Proposition
\ref{prop1}, take a line bundle $L$ on $X$ of degree one. The vector bundle
$\phi_*L$ is stable by Proposition \ref{prop1}(1).

The vector bundle $\phi^*\phi_*L$ is not stable by Proposition \ref{prop1}(2). Therefore,
$$
f^*(\phi_*L)\, =\, \beta^*\phi^*(\phi_*L)\,=\, \beta^*(\phi^*\phi_*L)
$$
is not stable.
\end{proof}

Theorem \ref{thm1} and Proposition \ref{prop2} together give the following:

\begin{theorem}\label{thm2}
Let $f\,:\,C\,\longrightarrow\,D$ be a nonconstant separable morphism
between irreducible smooth projective curves. The map $f$ is genuinely ramified 
if and only if $f^*E$ is stable for every stable vector bundle $E$ on $D$.
\end{theorem}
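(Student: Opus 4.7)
The plan is to combine the two directions that have already been established in the preceding sections: this biconditional is exactly the conjunction of Theorem \ref{thm1} and Proposition \ref{prop2}. The proof proper will therefore be only a couple of lines, but it is worth spelling out the logical structure and pointing to where the real work happened.

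For the ``only if'' direction (genuine ramification $\Rightarrow$ stability is preserved under pullback), I would simply invoke Theorem \ref{thm1}: for any stable vector bundle $E$ on $D$, the assumption that $f$ is genuinely ramified forces $f^*E$ to be stable on $C$. This is the substantive half, and all the difficulty has already been absorbed into the proof of Theorem \ref{thm1} via the Galois closure construction used in Lemma \ref{negetiveslope} (combined with Proposition \ref{sumofnegetivedegreelinebundles1}) together with Lemma \ref{lemma3.8} and the socle-and-endomorphism-algebra argument that forces any maximal polystable subbundle of $\pi_1^*f^*V$ to descend to a subbundle of $V$ on $D$.

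For the converse, I would argue by contraposition. Suppose $f$ is not genuinely ramified. By Proposition \ref{genuinerampi1surj} one can factor $f=\phi\circ\beta$ with $\phi\colon X\to D$ a nontrivial \'etale cover and $\beta\colon C\to X$ a morphism. Pick a line bundle $L$ on $X$ of degree one, so that $\deg(\phi_*L)=1$ is coprime to $\mathrm{rk}(\phi_*L)=\deg(\phi)$. Proposition \ref{prop1}(1) then gives a stable bundle $\phi_*L$ on $D$, while Proposition \ref{prop1}(2) shows that $\phi^*\phi_*L$ is non-stable, the destabilizing datum being the canonical evaluation map $\phi^*\phi_*L\to L$. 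Pulling back further by $\beta$ multiplies degrees and ranks uniformly, so any destabilizing subsheaf of $\phi^*\phi_*L$ remains destabilizing in $\beta^*(\phi^*\phi_*L)=f^*(\phi_*L)$. Thus $f^*(\phi_*L)$ is not stable, which is precisely Proposition \ref{prop2}.

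The main obstacle has already been cleared well before this point. All the serious technical work sits in Theorem \ref{thm1} on the genuinely ramified side, and in the stability assertion of Proposition \ref{prop1}(1) (which itself rests on the interaction between the evaluation map $H\colon\phi^*\phi_*L\to L$ and inequality \eqref{a1}). What remains to write here is only the bookkeeping that assembles these two results into the clean biconditional.
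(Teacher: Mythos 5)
Your proposal is correct and matches the paper's proof exactly: the paper likewise obtains Theorem \ref{thm2} as the immediate conjunction of Theorem \ref{thm1} (the genuinely ramified direction) and Proposition \ref{prop2} (the contrapositive of the converse, via Propositions \ref{genuinerampi1surj} and \ref{prop1}). No gaps.
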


\section*{Acknowledgements}

We thank both the referees for going through the paper very carefully and making numerous suggestions. The 
first-named author is partially supported by a J. C. Bose Fellowship. Both authors were supported by the
Department of Atomic Energy, Government of India, under project no.12-R\&D-TFR-5.01-0500.

\end{document}